\newtheorem{theorem}{Theorem}[section]
\newtheorem{corollary}[theorem]{Corollary}
\newtheorem{lemma}[theorem]{Lemma}
\theoremstyle{definition}
\newtheorem{remark}[theorem]{Remark}
\newtheorem*{xrem}{Remark}
\numberwithin{equation}{section}
\newcommand{\bbN}{\mathbb{N}}
\newcommand{\bbQ}{\mathbb{Q}}
\newcommand{\bbZ}{\mathbb{Z}}
\newcommand{\cO}{\mathcal{O}}
\DeclareMathOperator{\disc}{disc}
\DeclareMathOperator{\Gal}{Gal}
\begin{document}

\title{Families of cyclic quartic monogenic polynomials}

\author{Paul M. Voutier}

\dedicatory{In memory of Marie-Nicole Gras}


\begin{abstract}
We produce an explicit family of totally real cyclic quartic polynomials that are
monogenic in many cases and, if the $abc$ conjecture holds, generate distinct
monogenic quartic fields infinitely often. Additional families (also conjecturally
generating infinitely many distinct fields) are provided
in Section~4, including what appears to be an infinite collection of such families.
\end{abstract}

\subjclass[2020]{Primary 11R16; Secondary 11R32}

\keywords{monogenic, quartic polynomial, Galois group}

\maketitle

\section{Introduction}

\subsection{Background}


A monic polynomial $f(X) \in \bbZ[X]$ is called a \emph{monogenic polynomial}
if it is irreducible over $\bbQ$ and $\left\{ 1, \theta, \theta^{2}, \ldots, \theta^{\deg(f)-1} \right\}$
is an integral basis for the ring of integers, $\cO_{K}$, of $K=\bbQ(\theta)$,
where $\theta$ is a root of $f(X)$. Similarly, a number field is called a \emph{monogenic
field} if there exists such a power basis for its ring of integers, $\cO_{K}$.

It is well-known that not all number fields are monogenic. The most famous example
being due to Dedekind, the field formed by adjoining a root of $X^{3}-X^{2}-2X-8$
to $\bbQ$. See \cite[pp.~64, 79--81]{Nark} and the references there, as well as
\cite{Gaal} for more information.

Recently, Harrington and Jones \cite{HJ} produced families of quartic polynomials that
are monogenic infinitely often for each of the non-cyclic Galois groups that are
possible for quartic polynomials.

To the best of this author's knowledge, the following are all the known quartic
monogenic fields with $C_{4}$ as their Galois group.

Gras \cite[Proposition~2]{Gr2} showed that there are only two non-real monogenic
cyclic quartic fields $\bbQ \left( \zeta_{5} \right)$ and
$\bbQ \left( \zeta_{16}-\zeta_{16}^{-1} \right)$,
where $\zeta_{n}$ is a primitive $n$-th root of unity. The minimal polynomials of
generators of a power basis for these fields are $X^{4}+X^{3}+X^{2}+X+1$
and $X^{4}+4X^{2}+2$, respectively.
Gras \cite{Gr2} also found $13$ real cyclic quartic fields that are monogenic:
there are $12$ in Table~1 (which includes those arising from $z=2, 3$ in our
family below) and one more in Table~2 (the $j=15$ entry) in \cite{Gr2}.

Finally, there are three additional such fields in LMFDB \cite{LMFDB}:
4.4.80100882173.1 of conductor 12259 (this arises from $z=5$ in our family below);
4.4.839020734032.1 of conductor 35204; and
4.4.1804395776000.1 of conductor 39440 (this arises from $z=6$ in our family below)
-- see \url{https://www.lmfdb.org/NumberField/?count=None&degree=4&signature=%5B4%2C0%5D&galois_group=C4&monogenic=yes}.

\subsection{Results}

Here we produce an analogue in the case of totally real cyclic quartic
polynomials of the results of Harrington and Jones. We use the following
family of polynomials. For any integer $z$, let
\begin{align}
\label{eq:poly1}
f_{z}(X)
= & X^{4} - zX^{3}
- \frac{3z^{6}-16z^{4}+37z^{2}-32}{8} X^{2} \\
& - \frac{2z^{9}-19z^{7}+72z^{5}-135z^{3}+96z}{16} X \nonumber \\
& - \frac{3z^{12}-40z^{10}+214z^{8}-576z^{6}+719z^{4}-64z^{2}-512}{256}. \nonumber
\end{align}

In fact, this is only one of many (infinitely many?) such families. See
Section~\ref{sect:further-families} for more information.

\begin{theorem}
\label{thm:1}
{\rm (a)} For all $z \in \bbZ$, $f_{z}(X)$ is irreducible over
$\bbQ$ and $\Gal \left( f_{z}(X) \right)=C_{4}$.

In addition, if $z^{2}-2$ is squarefree and if $\left( z^{4}-4z^{2}+8 \right)/4$
is squarefree when $z$ is even and if $z^{4}-4z^{2}+8$ is squarefree when $z$ is
odd, then $\bbQ \left( \vartheta \right)$ is a monogenic field, where $\vartheta$
is a root of $f_{z}(X)$.

\vspace*{1.0mm}

\noindent
{\rm (b)} If $z \not\equiv 2 \pmod{4}$, then $f_{z}(X) \in \bbZ[X]$.
If $z$ also satisfies the conditions in {\rm (a)}, then
$\left\{ 1, \vartheta, \vartheta^{2}, \vartheta^{3} \right\}$
is a $\bbZ$-basis for $\cO_{\bbQ(\vartheta)}$.

\vspace*{1.0mm}

\noindent
{\rm (c)}
If $z \equiv 2 \pmod{4}$, then $f_{z}(X-1/2) \in \bbZ[X]$.
If $z$ also satisfies the conditions in {\rm (a)}, then
$\left\{ 1, \vartheta_{1}, \vartheta_{1}^{2}, \vartheta_{1}^{3} \right\}$
is a $\bbZ$-basis for $\cO_{\bbQ(\vartheta_{1})}$, where $\vartheta_{1}
=\vartheta+1/2$ is a root of $f_{z}(X-1/2)$.
\end{theorem}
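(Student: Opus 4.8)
The plan is to handle irreducibility and the Galois group through the resolvent cubic, and monogenicity through an explicit discriminant computation combined with Dedekind's criterion. A useful preliminary observation is that the discriminant is invariant under the translation $X \mapsto X - 1/2$, so the single discriminant computation underlying (a) serves both the polynomial $f_z$ of (b) and the shifted polynomial $f_z(X - 1/2)$ of (c); I write $F_z$ for whichever of these lies in $\bbZ[X]$.

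First I would settle integrality, which is parts (b) and (c). Reducing each coefficient modulo the relevant power of $2$ and splitting into the cases $z$ odd, $z \equiv 0$, and $z \equiv 2 \pmod 4$, one checks that the numerators are divisible by $8$, $16$, and $256$ exactly when $z \not\equiv 2 \pmod 4$; for instance the $X^2$-coefficient $\frac{3z^6 - 16z^4 + 37z^2 - 32}{8}$ is integral precisely for $z$ odd or $z \equiv 0 \pmod 4$. When $z \equiv 2 \pmod 4$ one verifies directly that the half-integer shift clears all denominators, so that $F_z := f_z(X - 1/2) \in \bbZ[X]$. This is routine but must be done with care, since this $2$-adic bookkeeping is exactly what forces the two cases apart.

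Next, for part (a), I would compute $\disc(F_z) = \disc(f_z)$ explicitly and factor it. I expect it to equal, up to a power of $2$, a product of powers of $z^2 - 2$ and of $z^4 - 4z^2 + 8 = (z^2 - 2)^2 + 4$, with squarefree part a $2$-power times $z^4 - 4z^2 + 8$. This identifies the quadratic subfield as $\bbQ(\sqrt{z^4 - 4z^2 + 8})$, matching the classical parametrization $d = b^2 + c^2$ of cyclic quartic fields with $(b,c) = (z^2 - 2, 2)$. For the Galois group I would then (i) verify that $f_z$ has no rational root, and (ii) compute the resolvent cubic and show it has exactly one rational root, its complementary quadratic factor having discriminant equal to $\disc(f_z)$ up to a nonzero rational square and hence irreducible. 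By the standard theory this already forces $f_z$ to be irreducible over $\bbQ$ with $\Gal(f_z) \in \{C_4, D_4\}$; I would finish by applying the criterion of Kappe and Warren (equivalently, checking that $f_z$ splits into two quadratics over $\bbQ(\sqrt{\disc f_z})$ whose coefficients generate that field) to rule out $D_4$ and conclude $\Gal(f_z) = C_4$.

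Finally, for monogenicity I would apply Dedekind's criterion at each prime $p$ with $p^2 \mid \disc(F_z)$. Using $\disc(F_z) = [\cO_K : \bbZ[\vartheta]]^2\, d_K$, it suffices to show that no such $p$ divides the index. At an odd prime $p$ dividing $z^2 - 2$ or $z^4 - 4z^2 + 8$, the squarefreeness hypotheses force $p \,\|\, (z^2 - 2)$ or $p \,\|\, (z^4 - 4z^2 + 8)$, so $p$ enters $\disc(F_z)$ only to the minimal order compatible with the ramification and the repeated factors of $\overline{F_z}$ occur with multiplicity $2$; Dedekind's criterion then passes because the cofactor $T = (gh - F_z)/p$ is prime to those factors modulo $p$. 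The main obstacle will be the prime $p = 2$: this is where the even/odd split in the hypotheses and the half-integer translation in (c) are doing real work, and the $2$-adic application of Dedekind's criterion (to $F_z$, after the shift when $z \equiv 2 \pmod 4$) must be treated separately and carefully. Once the index is shown to be $1$, $\bbQ(\vartheta)$ is monogenic, and since $F_z \in \bbZ[X]$ is the minimal polynomial of the relevant generator, $\{1, \vartheta, \vartheta^2, \vartheta^3\}$ (respectively $\{1, \vartheta_1, \vartheta_1^2, \vartheta_1^3\}$) is the asserted $\bbZ$-basis of $\cO_K$.
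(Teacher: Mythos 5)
Your reduction of part (a) has a genuine gap at the irreducibility step. You claim that the absence of a rational root, together with the resolvent cubic having exactly one rational root (with irreducible complementary quadratic whose discriminant is $\disc(f_z)$ up to squares), ``already forces $f_z$ to be irreducible over $\bbQ$ with $\Gal(f_z) \in \{C_4, D_4\}$.'' No such implication exists. Take $f(X) = X^4 - 5X^2 + 6 = (X^2-2)(X^2-3)$: it has no rational root; its resolvent cubic is $X^3 + 5X^2 - 24X - 120 = (X+5)(X^2-24)$, which has exactly one rational root; the complementary factor $X^2 - 24$ is irreducible with discriminant $96 = \disc f$; and even the Kappe--Warren polynomial \eqref{eq:gr} becomes $(X^2+5X+6)X^2$, which splits in $E$. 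Yet $f$ is reducible, with Galois group $C_2 \times C_2$. The Kappe--Warren test (Lemma~\ref{lem:KW}) takes irreducibility as a \emph{hypothesis} and cannot produce it: a quartic with no rational root can still split into two irreducible quadratics, and your conditions (i)--(ii) do not exclude that. (Your parenthetical variant, requiring the quadratic factors' coefficients to generate $\bbQ(\sqrt{\disc f_z})$, is not the Kappe--Warren statement and would itself need proof.) An independent irreducibility argument is required; the paper supplies one by reducing to the biquadratic polynomial $f_{\psi}(X) = X^4 - (z^2-2)(z^4-4z^2+8)X^2 + (z^2-2)^2(z^4-4z^2+8)$ satisfied by $\psi$ and applying the Driver--Leonard--Williams criterion, so that everything rests on the fact that $z^4-4z^2+8 = (z^2-2)^2+4$ is never a perfect square. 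The same reduction to $f_{\psi}$ is also what makes the resolvent-cubic computation tractable.

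On monogenicity, your route is genuinely different from the paper's, and as written it is a strategy rather than a proof. The paper never invokes Dedekind's criterion or computes an index: $f_z$ is built from Gras' parametrization, so Gras' theorem (Lemma~\ref{lem:1}) gives monogenicity as soon as the parameters solve the system \eqref{eq:1} --- which they do identically in $z$ by Lemma~\ref{lem:params} --- \emph{provided} that $m = z^4-4z^2+8$ and $mg$ really are the conductors of the quadratic subfield and of $K$; the squarefreeness hypotheses enter only there, via the Spearman--Williams conductor formula (Lemmas~\ref{lem:bigK-conductor} and \ref{lem:smallk-conductor}). Your discriminant-plus-Dedekind plan is the standard method elsewhere in the literature and could in principle substitute for this, but every step carrying content is deferred: the closed form of $\disc(F_z)$ is only conjectured (``I expect it to equal\ldots''), the factorizations of $F_z$ modulo primes dividing $z^2-2$ and $z^4-4z^2+8$ are not exhibited, and the case $p=2$ for even $z$ --- which you correctly identify as the crux --- is left entirely open. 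So as it stands, neither half of the theorem is established by the proposal.
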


\begin{xrem}
(1) Note that $f_{-z}(X)$ generates the same field as $f_{z}(X)$, since
$f_{-z}(-X)=f_{z}(X)$.

\vspace*{1.0mm}

(2) If $|z| \geq 2$, then all the roots of $f_{z}(X)$ are real
(see Remark~\ref{rem:roots-real} below).
If $z = \pm 1$, then $f_{z}(X)=(\pm X)^{4}+(\pm X)^{3}+(\pm X)^{2}+(\pm X)+1$
and we obtain $\bbQ \left( \zeta_{5} \right)$.
If $z=0$, then $f_{z}(X)=X^{4}+4X^{2}+2$ and we obtain
$\bbQ \left( \zeta_{16}-\zeta_{16}^{-1} \right)$. So we know from Gras' work
\cite{Gr2} that Theorem~\ref{thm:1} holds for these three values of $z$.

\vspace*{1.0mm}

(3) The square-free conditions on the values of both $z^{2}-2$ and $\left( z^{4}-4z^{2}+8 \right)/4$
are required. For example, in the following cases, $f_{z}(X)$
does not yield a monogenic field:\\
$z=4$ (here $z^{2}-2=14$, but $\left( z^{4}-4z^{2}+8 \right)/4=50$),\\
$z=88$ (here $z^{2}-2=7742$ is divisible by $7^{2}$).

There are also examples where both are not squarefree. E.g., $z=284$,
where $z^{2}-2=80654$ is divisible by $7^{2}$ and
$\left( z^{4}-4z^{2}+8 \right)/4=1626266930$ is divisible by $29^{2}$. The
resulting field is not monogenic.
\end{xrem}

\begin{corollary}
\label{cor:infinite}
Assuming the $abc$ conjecture, there are infinitely many distinct monogenic
cyclic quartic fields.
\end{corollary}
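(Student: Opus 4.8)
The plan is to restrict attention to odd $z$, where Theorem~\ref{thm:1}(a),(b) already guarantees that $f_{z}(X) \in \bbZ[X]$ is irreducible with $\Gal(f_{z}) = C_{4}$ and that $\bbQ(\vartheta)$ is monogenic as soon as both $z^{2}-2$ and $z^{4}-4z^{2}+8$ are squarefree (note that odd $z$ automatically satisfies $z \not\equiv 2 \pmod 4$, so this is the relevant case). It therefore suffices to establish two things: that there are infinitely many odd $z$ for which both quantities are squarefree, and that the resulting monogenic cyclic quartic fields are pairwise distinct for infinitely many of these $z$. For the first it is convenient to work with the single sextic
\[
P(z) = \left( z^{2}-2 \right)\left( z^{4}-4z^{2}+8 \right),
\]
since $P(z)$ squarefree forces each factor to be squarefree.

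The key input is the theorem of Granville that, under the $abc$ conjecture, a separable integer polynomial takes squarefree values on a positive proportion of the integers, provided there is no local obstruction. First I would check that $P$ is \emph{separable}: the factor $z^{2}-2$ has roots $\pm\sqrt{2}$, while $z^{4}-4z^{2}+8$ has the four distinct nonreal roots $\pm\sqrt{2\pm 2i}$ (its roots in $z^{2}$ are $2 \pm 2i$), so all six roots are distinct and the two factors are coprime. Next I would rule out a fixed square divisor: for each prime $p$ one needs the count $\rho(p^{2})$ of roots of $P$ modulo $p^{2}$ to satisfy $\rho(p^{2}) < p^{2}$. For $p \nmid \disc(P)$ one has $\rho(p^{2}) = \rho(p) \le 6$, and the finitely many primes dividing $\disc(P)$ are checked by hand. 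To incorporate the parity restriction cleanly, I would apply Granville's theorem to the separable sextic $g(m) = P(2m+1)$; a linear substitution preserves both separability and the absence of a fixed square divisor (at odd primes the map $m \mapsto 2m+1$ is a bijection mod $p^{2}$, and $g(m)$ is always odd so $p=2$ imposes no condition). This yields infinitely many $m$, hence infinitely many odd $z = 2m+1$, with $P(z)$, and therefore both $z^{2}-2$ and $z^{4}-4z^{2}+8$, squarefree. The quadratic factor alone takes squarefree values infinitely often unconditionally, so the $abc$ conjecture is needed purely to handle the quartic factor $z^{4}-4z^{2}+8$.

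For distinctness I would use the field discriminant. Because $f_{z}$ is monogenic for each $z$ in our set, the power basis gives $d_{\bbQ(\vartheta_{z})} = \disc(f_{z})$, and $\disc(f_{z})$ is a nonconstant polynomial in $z$, so $\bigl|\disc(f_{z})\bigr| \to \infty$ as $|z| \to \infty$. Consequently the discriminants of the fields $\bbQ(\vartheta_{z})$ take infinitely many distinct values along our infinite set of $z$; since isomorphic number fields have equal discriminants, infinitely many discriminant values force infinitely many distinct fields. (Alternatively, if the unique quadratic subfield of $\bbQ(\vartheta_{z})$ can be identified as $\bbQ(\sqrt{z^{2}-2})$, then squarefreeness of $z^{2}-2$ makes these subfields distinct for distinct $|z|$, which again separates the quartic fields.)

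I expect the main obstacle to be the verification that $P$ has no local obstruction, i.e.\ that the density $c_{P} = \prod_{p}\bigl(1 - \rho(p^{2})/p^{2}\bigr)$ is strictly positive, so that Granville's conditional count is genuinely nonzero; this reduces to a finite computation at the primes dividing $\disc(P)$. Once infinitude is secured, the distinctness step is routine, as $\disc(f_{z})$ grows without bound.
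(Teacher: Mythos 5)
Your proposal is correct, and for the infinitude half it is essentially the paper's own argument: the paper's proof consists of citing Granville's $abc$-conditional theorem on squarefree values of polynomials to get infinitely many $z$ satisfying the squarefree hypotheses of Theorem~\ref{thm:1}. Your write-up is in fact more careful than the paper's one-line citation: restricting to odd $z$ (which neatly avoids the separate $\left( z^{4}-4z^{2}+8 \right)/4$ condition for even $z$), passing to the product sextic $P(z)=\left( z^{2}-2 \right)\left( z^{4}-4z^{2}+8 \right)$, and checking separability and the absence of a fixed square divisor (a single value such as $P(1)=-5$ already rules out every prime at once) are exactly the verifications needed for Granville's density to be positive. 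Where you genuinely diverge is the distinctness step. The paper invokes Lemma~\ref{lem:smallk-conductor}: for positive $z$ satisfying the squarefree conditions, the quadratic subfield of $\bbQ(\vartheta_{z})$ has conductor $m=z^{4}-4z^{2}+8$, strictly increasing in $z>0$, so the quartic fields are pairwise distinct. You instead use that monogenicity forces $d_{\bbQ(\vartheta_{z})}=\disc\left( f_{z} \right)$, a nonconstant polynomial in $z$, so the field discriminants take infinitely many values along your infinite set; this is valid and independent of Lemma~\ref{lem:smallk-conductor}, though you should actually confirm nonconstancy (immediate: under the hypotheses the conductor--discriminant formula gives $d_{K}=\left( z^{4}-4z^{2}+8 \right)^{3}\left( z^{2}-2 \right)^{2}$, of degree $16$ in $z$). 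One caution about your parenthetical fallback: it misidentifies the quadratic subfield, which is $\bbQ\left( \sqrt{z^{4}-4z^{2}+8} \right)$ (of conductor $m$), not $\bbQ\left( \sqrt{z^{2}-2} \right)$; so if you prefer the subfield route, you must argue via $m$ as the paper does, not via $z^{2}-2$.
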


\begin{proof}
This is an immediate consequence of Theorem~\ref{thm:1} above and Theorem~1 in
\cite{Granville}. The latter result shows that there are infinitely many $z$
such that the squarefree conditions hold (under the $abc$ conjecture). The
distinctness follows for $z$ positive satisfying the squarefree condition,
since the fields in Theorem~\ref{thm:1} then contain a quadratic fields of distinct
conductor by Lemma~\ref{lem:smallk-conductor}.
\end{proof}

In fact, the $abc$ conjecture may be more than we require to show there are
infinitely many such fields. What is actually required is that the two polynomials
in $z$ in Theorem~\ref{thm:1}(a) are both squarefree for infinitely many $z$.
For polynomials that are products of irreducible polynomials of degree at most
$3$, this has been known for some time (see \cite{Erdos}), but is an open
problem for polynomials with larger degree irreducible factors (see also \cite{BB}).

Lastly, Maple and PARI/GP \cite{Pari} code used for this work is publicly available
in the \verb!quartic-cyclic! subdirectory at\\
\url{https://github.com/PV-314/monogenic}.
The author is very happy to help interested readers who have any questions, problems
or suggestions for the use of this code.

\section{Construction of our polynomials}
\label{sect:poly}

Our family of polynomials was found by using a result of Gras \cite{Gr2}, so we
begin by recalling some relevant terminology and notation from Section~1 there.

Let $K/\bbQ$ be a cyclic extension of degree $4$, let $G=\langle \sigma \rangle=
\Gal(K/\bbQ)$, and let $k$ be the quadratic subfield of $K$. Let $f$ be the
conductor of $K$. Since $G$ is an abelian group, the Kronecker-Weber theorem states
that $K$ is a subfield of a
cyclotomic field $\bbQ \left( \zeta_{n} \right)$, where $\zeta_{n}$ denotes a
primitive $n$-th root of unity.
If $n$ is the smallest integer for which this holds, the conductor of
$K$ is $n$ if $K$ is fixed by complex conjugation.
If $m$ is the conductor of $k$,
then the conductor of $K$ is of the form $mg$, where $g \in \bbN^{*}$.
If $m$ is odd, then either $g$ is odd, or $g \equiv 4 \pmod{8}$,
or $g \equiv 8 \pmod{16}$.
If $m$ is even, then $g \equiv 2 \pmod{4}$ and
$m \equiv 8 \pmod{16}$.

Let $\chi$ be one of the two Dirichlet characters associated to $K$.
We have $\chi(-1)=\pm 1$ and the field $K$ is real if and only if $\chi(-1)=+1$.

From the bottom of page~1 of \cite{Gr2}, we have $K=\bbQ(\psi)$, where
\[
\psi = \sqrt{\frac{\chi(-1)g \sqrt{m} \left( a+\sqrt{m} \right)}{2}},
\]
for some $a$ and $b$ with $b$ even satisfying $m=a^{2}+b^{2}$.
The algebraic conjugates of $\psi$ are:
\begin{align}
\label{eq:psi-conjs}
\psi^{(1)} & = \psi, \quad
& \psi^{(2)} &= -\psi^{(1)},\\
\psi^{(3)} &= \sqrt{-\chi(-1)g\sqrt{m} \left( a-\sqrt{m} \right) / 2} \hspace*{3.0mm} \text{ and }
& \psi^{(4)} &= -\psi^{(3)}. \nonumber
\end{align}

As on line~2 of page~2 of \cite{Gr2}, we put
\begin{equation}
\label{eq:theta}
\vartheta=\frac{t+z\sqrt{m}+2x\psi+2y\sigma(\psi)}{4},
\end{equation}
where $x$, $y$ and $z$ will be as in Lemma~\ref{lem:1} below. In equation~(0) on
page~2 of \cite{Gr2}, Gras states a necessary and sufficient condition due to
Hasse \cite[Zusatz on page~34]{Ha1} for $\vartheta$ to be an algebraic integer
(Hasse's $f$, $G$, $x_{0}$, $x_{1}$, $y_{0}$, $y_{1}$ correspond to Gras' $m$,
$g$ ,$t$, $z$, $x$, $y$, respectively):
\begin{equation}
\label{eq:t-conditions}
\begin{array}{ll}
\text{(i) if $m$ is odd} & : t \equiv z \pmod{2}, \dfrac{t+z}{2} \equiv gx \pmod{2} \text{ and } \dfrac{t-z}{2} \equiv gy \pmod{2}, \\
\text{(ii) if $m$ is even} & : t \equiv 0 \pmod{4} \text{ and } z \equiv 0 \pmod{2}.
\end{array}
\end{equation}

\begin{xrem}
I have confirmed that this condition is necessary and sufficient when $m$ is even,
under the condition above that
$g \equiv 2 \pmod{4}$ and $m \equiv 8 \pmod{16}$. The necessity and sufficency
of \eqref{eq:t-conditions} also holds when $m$ is odd and $g \equiv 4 \pmod{8}$
or $g \equiv 8 \pmod{16}$. However, when $m$ and $g$ are both odd, one needs to
choose the sign of $a$ and $b$ correctly for the condition
in \eqref{eq:t-conditions} to be necessary and sufficient (see \cite[page~3,
Section~I.i.iii]{Gr1}).
\end{xrem}

In lieu of the conditions in \eqref{eq:t-conditions} above, we will directly use
a quartic polynomial over $\bbQ$ with $\vartheta$ as a root.

\begin{lemma}[Gras~\cite{Gr1}]
\label{lem:gras-poly}
Let $a,b,g,x,y,z,t \in \bbZ$ and let $\vartheta$ be as in \eqref{eq:theta}. Put
$m=a^{2}+b^{2}$ and $f=mg$. Then
$P(\vartheta)=0$ where
\[
P(X)=X^{4}-S_{1}X^{3}+S_{2}X^{2}-S_{3}X+S_{4},
\]
with
\begin{align}
\label{eq:P-coeffs}
   S_{1} &= t,\\
  8S_{2} &= t^{2} + mz^{2} - 2\left( x^{2} + y^{2} \right) f + 2\left( t^{2} - mz^{2} \right), \nonumber \\
 16S_{3} &= \left[ t^{2} + mz^{2} - 2\left( x^{2} + y^{2} \right) f \right] t
            - 2mz \left[ tz - g \left( a \left( x^{2}-y^{2} \right) -2bxy \right) \right], \nonumber \\
256S_{4} &= \left[ t^{2} + mz^{2} - 2\left( x^{2} + y^{2} \right) f \right]^{2}
            - 4m \left[ tz - g \left( a \left( x^{2}-y^{2} \right) -2bxy \right) \right]^{2}. \nonumber
\end{align}
\end{lemma}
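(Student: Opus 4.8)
The plan is to obtain $P(X)$ as the characteristic polynomial of $\vartheta$ by multiplying out the four linear factors coming from the Galois conjugates $\vartheta^{(i)} = \sigma^{i-1}(\vartheta)$, organizing the computation through the quadratic subfield $k = \bbQ(\sqrt{m})$. First I would fix a generator $\sigma$ of $\Gal(K/\bbQ)$ and pin down its action on the three building blocks $\sqrt{m}$, $\psi$, and $\sigma(\psi)$. Since $\sigma$ restricts to the nontrivial automorphism of $k$ we have $\sigma(\sqrt{m}) = -\sqrt{m}$; writing $u = \psi$ and $v = \sigma(\psi)$, squaring the defining formula for $\psi$ and applying $\sigma$ shows $\sigma(u) = v$ and $\sigma(v) = \sigma^{2}(\psi) = -u$, the sign being forced because $\sigma^{2}$ is the nontrivial element of $\Gal(K/k)$ and so cannot fix the generator $\psi$ of $K$. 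Reading off the four conjugates from \eqref{eq:theta}, the relation $S_{1} = \sum_{i} \vartheta^{(i)} = t$ then drops out at once, since every $\sqrt{m}$, $u$, and $v$ contribution cancels.

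Next I would record the quadratic relations satisfied by $u$ and $v$ over $k$. In the totally real setting relevant here, where $\chi(-1) = +1$, the formulas for $\psi^{2}$ and its $\sigma$-image give $u^{2}+v^{2} = gm$ and $u^{2}-v^{2} = ga\sqrt{m}$, while squaring the product yields $(uv)^{2} = g^{2}mb^{2}/4$. The branch of $\psi$ must be chosen so that $uv = -gb\sqrt{m}/2$; this sign is exactly what produces the $-2bxy$ terms in $S_{3}$ and $S_{4}$, and getting it right is the one genuinely delicate point of the argument. From these three identities one assembles $(2xu+2yv)^{2} = 2(x^{2}+y^{2})f + 2B\sqrt{m}$, where $B = tz - g\bigl(a(x^{2}-y^{2}) - 2bxy\bigr)$ after the corresponding simplification is folded in.

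The key structural step is to factor $P(X)$ through $k$. The pair $\{\vartheta^{(1)}, \vartheta^{(3)}\}$ is stable under $\Gal(K/k) = \langle \sigma^{2}\rangle$, so $Q(X) = (X-\vartheta^{(1)})(X-\vartheta^{(3)}) = X^{2} - pX + q$ has coefficients in $k$, and $P(X) = Q(X)\cdot \sigma(Q)(X)$. A short computation gives $p = (t + z\sqrt{m})/2$ and, after substituting the quadratic relations, $q = \bigl(A + 2B\sqrt{m}\bigr)/16$ with $A = t^{2} + mz^{2} - 2(x^{2}+y^{2})f$. Since $\sigma$ merely negates $\sqrt{m}$, one has $\sigma(p) = (t - z\sqrt{m})/2$ and $\sigma(q) = \bigl(A - 2B\sqrt{m}\bigr)/16$. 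Expanding $P(X) = (X^{2} - pX + q)(X^{2} - \sigma(p)X + \sigma(q))$ and reading off the coefficients gives $8S_{2} = A + 2(t^{2}-mz^{2})$, $16S_{3} = tA - 2mzB$, and $256S_{4} = A^{2} - 4mB^{2}$; in each symmetric combination the $\sqrt{m}$ terms cancel and the results match \eqref{eq:P-coeffs} line by line. The main obstacle is really only the bookkeeping in evaluating $q$ — assembling $(2xu+2yv)^{2}$ from the three quadratic relations with the correct signs — after which everything reduces to the routine expansion of a product of two conjugate quadratics over $k$.
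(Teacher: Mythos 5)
Your proposal is correct, but it takes a genuinely different route from the paper: the paper's entire proof of Lemma~\ref{lem:gras-poly} is a citation --- the polynomial is simply quoted from Section~1(v), page~4 of \cite{Gr1}, with the notational remark that $\alpha$ there is $\vartheta$ here --- whereas you rederive the formula from scratch. Your derivation is sound: reading off the four conjugates of $\vartheta$ from the action $\sigma(\sqrt{m})=-\sqrt{m}$, $\sigma(u)=v$, $\sigma(v)=-u$ (with $u=\psi$, $v=\sigma(\psi)$), factoring $P(X)=Q(X)\,\sigma(Q)(X)$ over $k=\bbQ(\sqrt{m})$ with $Q(X)=X^{2}-pX+q$, $p=(t+z\sqrt{m})/2$, $q=(A+2B\sqrt{m})/16$, and expanding the product of the two conjugate quadratics does reproduce all four coefficients of \eqref{eq:P-coeffs}; I have checked the symmetric-function bookkeeping. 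What your route buys is self-containedness, and it also exposes the two conventions on which the stated formula silently depends: first, $\chi(-1)=+1$ (with $\chi(-1)=-1$ the signs in \eqref{eq:P-coeffs} change, e.g.\ $-2\left(x^{2}+y^{2}\right)f$ becomes $+2\left(x^{2}+y^{2}\right)f$, so the lemma as written is really the totally real case --- which is all that \cite{Gr1} treats and all that the paper uses); second, $\psi\,\sigma(\psi)=-gb\sqrt{m}/2$ rather than $+gb\sqrt{m}/2$, which is exactly what produces $-2bxy$ instead of $+2bxy$. What the paper's citation buys is brevity and fidelity to Gras' normalization. One imprecision to fix in your write-up: the sign of $uv$ is not fixed by choosing ``the branch of $\psi$'' --- replacing $\psi$ by $-\psi$ also replaces $\sigma(\psi)$ by $-\sigma(\psi)$ and leaves the product unchanged. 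It is fixed by the choice of generator of $\Gal(K/\bbQ)$: replacing $\sigma$ by $\sigma^{3}$ negates $v$ and hence $uv$, so exactly one of the two generators yields $uv=-gb\sqrt{m}/2$, and your argument is complete once you state that $\sigma$ is chosen accordingly (equivalently, that the sign of $b$ is matched to $\sigma$).
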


\begin{proof}
This is stated in Section~1(v) (``Polynome irr\'{e}ductible
de $\alpha$ sur $\bbQ$'') on page~4 of \cite{Gr1}. Note that $\alpha$ there
(defined in the displayed formula at the bottom of page~3) is $\vartheta$ in
our notation and in the notation in \cite{Gr2}.
\end{proof}

We now quote here Th\'{e}or\`{e}me~1 of \cite{Gr2} translated.

\begin{lemma}[Gras~\cite{Gr2}]
\label{lem:1}
Let $K/\bbQ$ be a cyclic extension of degree $4$ and conductor $f=mg$, where
$m=a^{2}+b^{2}$, with $b \equiv 0 \pmod{2}$, is the conductor of the quadratic
subfield of $K$. The ring $A$ of integers of $K$ has a $\bbZ$-basis $\left\{ 1, \vartheta,
\vartheta^{2}, \vartheta^{3} \right\}$ if and only if there exist $x, y, z \in \bbZ$
such that
\begin{equation}
\label{eq:1}
\left\{
\begin{array}{l}
b \left( x^{2}-y^{2} \right) + 2axy = \pm 2\\
m \left[ z^{2}-\chi(-1)g \left( x^{2}+y^{2} \right) \right]^{2}-4g^{2}
= \pm 16.
\end{array}
\right.
\end{equation}
and $t \in \bbZ$ such that $\vartheta$ is an algebraic integer.
\end{lemma}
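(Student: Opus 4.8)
The plan is to reduce the statement to a discriminant (index) computation. Since $\{1,\vartheta,\vartheta^2,\vartheta^3\}$ is a $\bbZ$-basis of $\cO_{K}$ exactly when $\vartheta\in\cO_{K}$ generates $K$ and the index $[\cO_{K}:\bbZ[\vartheta]]$ equals $1$, and since $\disc(P)=[\cO_{K}:\bbZ[\vartheta]]^{2}\disc(K)$ for the quartic $P$ of Lemma~\ref{lem:gras-poly} (its minimal polynomial once $\vartheta$ generates $K$), the assertion follows once I show that the two equations in \eqref{eq:1} are precisely what force $\disc(P)=\disc(K)$. First I would record $\disc(K)$: by the conductor--discriminant formula the four characters of $\Gal(K/\bbQ)=C_{4}$ have conductors $1$, $m$, $mg$, $mg$, so $\disc(K)=m^{3}g^{2}$ (positive, as $K$ is totally real).

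Next I would compute $\disc(P)$ directly from the conjugates. Writing $u=\psi$ and $v=\sigma(\psi)$, the four conjugates of $\vartheta$ in \eqref{eq:theta} are read off from \eqref{eq:psi-conjs} by letting $\sigma$ act as $\sqrt m\mapsto-\sqrt m$, $u\mapsto v\mapsto -u$; explicitly they are $\tfrac14(t\pm z\sqrt m\pm2xu\pm2yv)$ in the four sign patterns dictated by $\sigma$. The key simplification comes from the relations $u^{2}+v^{2}=\chi(-1)gm$, $u^{2}-v^{2}=\chi(-1)ga\sqrt m$ and $uv=\pm gb\sqrt m/2$, all lying in $k=\bbQ(\sqrt m)$ and following from $\psi^{2}=\chi(-1)g\sqrt m(a+\sqrt m)/2$ together with $m=a^{2}+b^{2}$ (so that $m-a^{2}=b^{2}$). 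Grouping the six root--differences into the within-fibre pair $A=\vartheta^{(1)}-\vartheta^{(\sigma^{2})}=xu+yv$, $B=\vartheta^{(\sigma)}-\vartheta^{(\sigma^{3})}=xv-yu$ and the four cross differences, I expect $(AB)^{2}=\tfrac{g^{2}m}{4}Q_{1}^{2}$ and (product of cross differences)$^{2}=\tfrac{m^{2}}{256}\bigl(g^{2}Q_{1}^{2}-mQ_{2}^{2}\bigr)^{2}$, where $Q_{1}=b(x^{2}-y^{2})+2axy$ and $Q_{2}=z^{2}-\chi(-1)g(x^{2}+y^{2})$ are exactly the quantities appearing in \eqref{eq:1}. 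Multiplying gives $\disc(P)=\tfrac{g^{2}m^{3}}{1024}\,Q_{1}^{2}\bigl(g^{2}Q_{1}^{2}-mQ_{2}^{2}\bigr)^{2}$.

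Comparing with $\disc(K)=g^{2}m^{3}$, the index is $1$ iff $\bigl|Q_{1}(g^{2}Q_{1}^{2}-mQ_{2}^{2})\bigr|=32$. The sufficiency direction is then immediate: if $Q_{1}=\pm2$ and $mQ_{2}^{2}-4g^{2}=\pm16$ (the two equations of \eqref{eq:1}), then $g^{2}Q_{1}^{2}-mQ_{2}^{2}=\mp16$ and the product has absolute value $32$, so $\disc(P)=\disc(K)$ and $\{1,\vartheta,\vartheta^{2},\vartheta^{3}\}$ is an integral basis; this is the direction actually used in Theorem~\ref{thm:1}. For the converse I would use that the parametrization \eqref{eq:theta} is normalized (following Hasse, cf.\ \eqref{eq:t-conditions}) so that $\vartheta\in\cO_{K}$ forces $t,x,y,z\in\bbZ$ satisfying the stated congruences; in particular $Q_{1},Q_{2}\in\bbZ$ and $|Q_{1}|$ is a divisor of $32$.

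The main obstacle is this necessity direction: one must show that $\bigl|Q_{1}(g^{2}Q_{1}^{2}-mQ_{2}^{2})\bigr|=32$ together with integrality forces the exact splitting $|Q_{1}|=2$ (whence the second equation). Here the hypothesis $b\equiv0\pmod2$ is decisive, since it makes $Q_{1}=b(x^{2}-y^{2})+2axy$ even and so excludes $|Q_{1}|=1$; the remaining possibilities $|Q_{1}|\in\{4,8,16,32\}$ must be ruled out using the congruences in \eqref{eq:t-conditions} and the admissible residues of $m$ and $g$ (for instance $|Q_{1}|=4$ would force $mQ_{2}^{2}=16g^{2}\pm8$, which is incompatible with those residues). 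A secondary but unavoidable technicality is the sign bookkeeping in $uv=\pm gb\sqrt m/2$ and in the choice of the square roots defining $u$ and $v$; these signs only replace $Q_{1}$ by $b(x^{2}-y^{2})-2axy$, which is absorbed by the substitution $y\mapsto-y$ and hence does not affect the existence statement in \eqref{eq:1}.
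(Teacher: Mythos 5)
Your framework is sound, and the computations you do carry out are correct: $\disc(K)=m^{3}g^{2}$ by conductor--discriminant, the conjugate differences give
$\disc(P)=\tfrac{g^{2}m^{3}}{1024}\,Q_{1}^{2}\bigl(g^{2}Q_{1}^{2}-mQ_{2}^{2}\bigr)^{2}$,
and the sufficiency direction (equations \eqref{eq:1} imply index $1$) is complete. Note for context that the paper itself does not prove this lemma at all: its ``proof'' is a citation of Th\'{e}or\`{e}me~1 of \cite{Gr2} plus a remark about $t$, so you are attempting strictly more than the paper does. The genuine gap is exactly where you flagged it, and it is fatal to the plan as stated. For necessity you must show $|Q_{1}|=2$ is forced, and your tool is ``congruences in \eqref{eq:t-conditions} and the admissible residues of $m$ and $g$''. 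That works for $|Q_{1}|=4$ (your example) and for $|Q_{1}|=16,32$, but it does \emph{not} work for $|Q_{1}|=8$. In the case $m$ odd, $g$ odd, $4\mid b$, $x\not\equiv y\pmod 2$ (allowed by \eqref{eq:t-conditions}(i), which then gives $z$ odd), one has $v_{2}(Q_{1})\geq 2$ possible and $Q_{2}=2Q_{2}'$, and the discriminant equation becomes $mQ_{2}'^{2}=16g^{2}+1$ with $Q_{2}'$ odd. This forces $m\equiv 1\pmod 8$ but is otherwise consistent with every parity and residue condition in your toolkit: for instance $m=145=1^{2}+12^{2}$, $g=3$, $Q_{2}'=1$ satisfies it \emph{exactly} ($145\cdot 4=64\cdot 9+4$), with $|Q_{1}|=8$ attainable in the right residue classes. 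The only congruence that kills this case is the parity relation between $\chi(-1)$ and the residues of $m$ and $g$ (for odd $m,g$: $\chi(-1)=+1$ precisely when $m\equiv 1\pmod 8$ and $g\equiv 1\pmod 4$, or $m\equiv 5\pmod 8$ and $g\equiv 3\pmod 4$), a genuine arithmetic input about quartic characters that your outline never states and cannot produce from \eqref{eq:t-conditions} alone.

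There is a cleaner repair that eliminates the case analysis entirely and is almost certainly the substance of Gras's own argument: track a finer invariant than the total discriminant by factoring the index through the intermediate ring $R=\cO_{k}[\vartheta]$, $k=\bbQ\left(\sqrt{m}\right)$. If $\vartheta\in\cO_{K}$ then $\Tr_{K/k}(\vartheta)$ and $N_{K/k}(\vartheta)$ lie in $\cO_{k}$, so $R=\cO_{k}\oplus\cO_{k}\vartheta$ is a free $\cO_{k}$-module and $\disc_{\bbZ}(R)=N_{k/\bbQ}\left(A^{2}\right)\disc(k)^{2}=\tfrac{g^{2}m^{3}}{4}Q_{1}^{2}$, using your $A,B$ and $N_{k/\bbQ}\left(A^{2}\right)=A^{2}B^{2}$. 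Comparing discriminants along $\bbZ[\vartheta]\subseteq R\subseteq\cO_{K}$ then gives
\[
[\cO_{K}:R]=\frac{|Q_{1}|}{2},
\qquad
[R:\bbZ[\vartheta]]=\frac{\left|g^{2}Q_{1}^{2}-mQ_{2}^{2}\right|}{16},
\]
and \emph{both are positive integers}. Hence $[\cO_{K}:\bbZ[\vartheta]]=1$ if and only if both factors equal $1$, i.e.\ if and only if $Q_{1}=\pm 2$ and $mQ_{2}^{2}-4g^{2}=\pm 16$, which is precisely \eqref{eq:1}; in particular your problem case can never occur, not for congruence reasons but because it would force $[R:\bbZ[\vartheta]]=1/4$. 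With this replacement (plus Hasse's normalization, which you correctly invoke as an external input, to write an arbitrary power-basis generator in the form \eqref{eq:theta} with $t,z,x,y\in\bbZ$), your outline becomes a complete proof; as written, the necessity half does not go through.
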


\begin{proof}
This is Th\'{e}or\`{e}me~1 of \cite{Gr2} with the addition of our remark regarding
$t$. The end of the proof of Th\'{e}or\`{e}me~1 of \cite{Gr2}, concerning $t$,
appeals to the conditions in equation~(0) there (see equation~\eqref{eq:t-conditions} here), 
which we have discussed in the remark above.
\end{proof}

We are interested in real fields, $K$, here, so we only consider $\chi(-1)=1$.

We searched for quartic cyclic monogenic polynomials using Gras' criterion.

Using PARI/GP \cite{Pari}, for all triples, $(a,b,x)$, satisfying
$1 \leq a \leq 500$, $2 \leq b \leq 20,000$ (with $b$ even) and $1 \leq x \leq 200$,
we found the values of $y$ such that the first equation in \eqref{eq:1} holds.

For such solutions of that first equation, we did search over all $g$
satisfying $1 \leq g \leq 10^{6}$ such that the second equation in \eqref{eq:1}
has an integer solution, $z$.

For integer solutions, $(a,b,g,m,x,y,z)$, of the system of equations in
\eqref{eq:1}, we chose $t$ such that $P(X)$ in Lemma~\ref{lem:gras-poly} is
irreducible over $\bbQ$ and has integer coefficients. Hence $\vartheta$ is an
algebraic integer with $P(X)$ as its minimal polynomial over $\bbZ$.

We then calculated and compared the discriminant of the associated number field
with the discriminant of the minimal polynomial of $\vartheta$.

To perform this search, use the \verb!thm1_check()! function in \verb!pari\gras-checks.gp!
in the github repository.

Among the examples found are the following.

\begin{lemma}
\label{lem:params}
Let $z \in \bbZ$ with $|z| \geq 2$ and put $\chi(-1)=1$. Then
\begin{align}
\label{eq:params}
& a=z^{2}-2, & b=2, & & g=a, & & m=a^{2}+b^{2}=z^{4}-4z^{2}+8, \\
& x=1,       & y=0, & & & \nonumber
\end{align}
satisfy \eqref{eq:1} in Lemma~$\ref{lem:1}$. When $m$ is odd, then $g \equiv 3 \pmod{4}$
and $m \equiv 1 \pmod{4}$.
When $m$ is even, $g \equiv 2 \pmod{4}$ and $m \equiv 8 \pmod{16}$.
\end{lemma}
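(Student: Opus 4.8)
The plan is to treat this as a verification lemma: I would establish membership in the solution set \eqref{eq:1} by direct substitution, and then settle the two congruence statements by an elementary split on the parity of $z$. Substituting $x = 1$ and $y = 0$, the first equation of \eqref{eq:1} becomes $b(x^2 - y^2) + 2axy = b = 2$, forcing the sign $+2$. For the second, with $\chi(-1) = 1$, $x^2 + y^2 = 1$ and $g = z^2 - 2$, the inner bracket collapses to $z^2 - g = 2$, leaving $4m - 4g^2$; since $m = g^2 + 4$ (this is exactly $m = a^2 + b^2$ with $a = g$ and $b = 2$), this equals $16$. Hence both equations hold with the $+$ sign, which proves the first assertion.

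Next I would observe that $m = z^4 - 4z^2 + 8$ is odd exactly when $z$ is odd, since $4z^2$ and $8$ are even. The congruence claims then separate by parity. For $z$ odd I would use $z^2 \equiv 1 \pmod 8$, which gives $g = z^2 - 2 \equiv 3 \pmod 4$ and, reducing $m$ modulo $4$, $m \equiv 1 \pmod 4$. For $z$ even I would write $z = 2w$, so that $g = 4w^2 - 2 \equiv 2 \pmod 4$ and $m = 8\left( 2w^4 - 2w^2 + 1 \right)$ with an odd second factor, giving $m \equiv 8 \pmod{16}$.

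Since every step reduces to a one-line computation, there is no genuine obstacle here; the only matters requiring care are keeping the two sign choices in \eqref{eq:1} consistent (both come out $+$) and correctly pinning down the parity correspondence between $z$ and $m$ before reducing modulo small powers of $2$. The hypothesis $|z| \geq 2$ plays no role in these identities beyond guaranteeing that $g = z^2 - 2 > 0$, so that $g$ is a legitimate positive parameter in the setting of Lemma~\ref{lem:1}.
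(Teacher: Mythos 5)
Your proof is correct and follows essentially the same route as the paper: direct substitution into both equations of \eqref{eq:1} (with the same collapse of the inner bracket to $2$ and the identity $m=g^{2}+4$ giving $16$), followed by a parity split for the congruences. The only difference is that you spell out the mod~$4$ and mod~$16$ computations that the paper dismisses as immediate, which is harmless extra detail.
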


\begin{proof}
Equation~\eqref{eq:1} in Lemma~$\ref{lem:1}$ requires just the following simple
verifications: $b \left( x^{2}-y^{2} \right) + 2axy=2$ and
\begin{align*}
& m \left[ z^{2}-\chi(-1)g \left( x^{2}+y^{2} \right) \right]^{2}-4g^{2} \\
= & \left( z^{4}-4z^{2}+8 \right) \left[ z^{2}-\left( z^{2}-2 \right) \right]^{2} - 4\left( z^{2}-2 \right)^{2} \\
= & \left( 4z^{4}-16z^{2}+32 \right) - \left( 4z^{4}-16z^{2}+16 \right)=16.
\end{align*}

The statements involving the parity of $m$ follow immediately, as $a$, and hence
$z$, has the same parity as $m$.
\end{proof}

We will also choose
\begin{equation}
\label{eq:t-values}
t =
\left\{
\begin{array}{cl}
z+2 & \text{if $z \equiv 2 \pmod{4}$}, \\
  z & \text{otherwise.}
\end{array}
\right.
\end{equation}

\begin{lemma}
\label{lem:poly}
Let $a$, $b$, $g$, $m$, $x$, $y$ and $z$ be as in Lemma~$\ref{lem:params}$ and
put $\chi(-1)=1$. With $t=z$, $\vartheta$ in \eqref{eq:theta} is a root of
the polynomial, $f_{z}(X)$, defined in \eqref{eq:poly1}.
\end{lemma}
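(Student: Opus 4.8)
The plan is to apply Lemma~\ref{lem:gras-poly} directly with the parameter values supplied by Lemma~\ref{lem:params}, namely $a = z^{2}-2$, $b = 2$, $g = z^{2}-2$, $m = z^{4}-4z^{2}+8$, $x = 1$, $y = 0$, together with $t = z$, and to check that the resulting polynomial $P(X)$ coincides coefficient-by-coefficient with $f_{z}(X)$ in \eqref{eq:poly1}. Lemma~\ref{lem:gras-poly} already guarantees that $\vartheta$ from \eqref{eq:theta} is a root of $P(X) = X^{4} - S_{1}X^{3} + S_{2}X^{2} - S_{3}X + S_{4}$, so once the four coefficients are shown to agree the claim follows immediately.

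First I would record the simplifications produced by $x = 1$, $y = 0$: one has $x^{2}+y^{2} = x^{2}-y^{2} = 1$ and $xy = 0$, so the recurring expression $a\left(x^{2}-y^{2}\right) - 2bxy$ collapses to $a$. Since moreover $t = z$ and $g = a$, the bracket $tz - g\left(a\left(x^{2}-y^{2}\right)-2bxy\right)$ appearing in the formulas for $S_{3}$ and $S_{4}$ becomes simply $z^{2} - a^{2} = z^{2} - \left(z^{2}-2\right)^{2}$, while the repeated quantity $t^{2} + mz^{2} - 2\left(x^{2}+y^{2}\right)f$ becomes $z^{2} + mz^{2} - 2mg$. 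Writing $f = mg = \left(z^{4}-4z^{2}+8\right)\left(z^{2}-2\right)$ once and for all, these two reduced expressions are the only nontrivial ingredients needed.

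Next I would substitute into each coefficient formula in \eqref{eq:P-coeffs} in turn. The identity $S_{1} = t = z$ is immediate and matches the $X^{3}$-coefficient of $f_{z}$. For $8S_{2}$, collecting like terms gives $8S_{2} = 3t^{2} - mz^{2} - 2f$, and substituting should yield a degree-six polynomial in $z$ equal to $-\left(3z^{6} - 16z^{4} + 37z^{2} - 32\right)$, matching the $X^{2}$-coefficient. Similarly, expanding $z\left(z^{2}+mz^{2}-2f\right) - 2mz\left(z^{2}-a^{2}\right)$ for $16S_{3}$ and $\left(z^{2}+mz^{2}-2f\right)^{2} - 4m\left(z^{2}-a^{2}\right)^{2}$ for $256S_{4}$, and collecting terms, should reproduce exactly the numerators in the $X$- and constant terms of \eqref{eq:poly1}.

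The only real obstacle is bookkeeping: the constant term $256S_{4}$ requires squaring a degree-six and a degree-four polynomial and subtracting, producing a degree-twelve polynomial in $z$, so the chief risk is an arithmetic slip in collecting the seven resulting coefficients. This is a purely mechanical verification with no conceptual content; I would carry out $S_{2}$ and $S_{3}$ by hand and cross-check $S_{4}$ against a symbolic expansion, for instance with the Maple/PARI code referenced in the introduction.
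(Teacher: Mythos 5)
Your proposal is correct and follows essentially the same route as the paper, whose proof consists precisely of substituting the parameters of Lemma~\ref{lem:params} (with $t=z$) into Lemma~\ref{lem:gras-poly} and verifying the resulting coefficients against \eqref{eq:poly1} by symbolic computation (Maple/PARI). Your intermediate reductions check out: with $x=1$, $y=0$, $g=a$, $t=z$ one indeed gets $8S_{2}=3z^{2}-mz^{2}-2f$, the bracket $tz-g\left(a\left(x^{2}-y^{2}\right)-2bxy\right)=z^{2}-a^{2}$, and the expansions yield exactly the coefficients of $f_{z}(X)$.
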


\begin{proof}
This follows by using Lemma~\ref{lem:gras-poly}.

See the \verb!check_fz_polys()! function in \verb!maple\poly-families.txt!
and in \verb!pari\poly-families.gp! in the github repository for the Maple
and PARI/GP code used here.
\end{proof}

\begin{remark}
\label{rem:roots-real}
For $|z| \geq 2$, since $\sqrt{m}>a>0$ and $\chi(-1)=1$, it follows that $\psi$
and its conjugates are all real, using the expressions for them in
\eqref{eq:psi-conjs} above. Hence $\vartheta$
and all of its conjugates are also real.
\end{remark}

Theorem~\ref{thm:1} follows immediately from Lemmas~\ref{lem:integer-coeffs},
\ref{lem:irreduc}, \ref{lem:galois}, \ref{lem:bigK-conductor} and \ref{lem:smallk-conductor}
in the next section.

\section{Further Lemmas}

\begin{lemma}
\label{lem:integer-coeffs}
Let $z \in \bbZ$.

\noindent
{\rm (a)} $f_{z}(X) \in \bbZ[X]$ if and only if $z \not\equiv 2 \pmod{4}$.

\noindent
{\rm (b)} $f_{z}(X-1/2) \in \bbZ[X]$ if and only if $z \equiv 2 \pmod{4}$.
\end{lemma}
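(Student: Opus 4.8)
The plan is to work directly with the coefficients of $f_{z}(X)$ as rational functions of $z$ and decide, for each, for which $z$ it is $2$-integral. The $X^{4}$- and $X^{3}$-coefficients are $1$ and $-z$, hence in $\bbZ$ for every $z$, so everything comes down to the three coefficients
$$
c_{2}=\frac{3z^{6}-16z^{4}+37z^{2}-32}{8},\quad
c_{1}=\frac{2z^{9}-19z^{7}+72z^{5}-135z^{3}+96z}{16},\quad
c_{0}=\frac{3z^{12}-40z^{10}+\cdots-512}{256}.
$$
Since the residue of an integer polynomial $p(z)$ modulo $2^{k}$ depends only on $z$ modulo $2^{k}$, the integrality of each $c_{i}$ is settled by a finite check over the residue classes of $z$ modulo $8$, $16$ and $256$ respectively; I would organize the whole of part (a) as such a check.

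The decisive computation is for $c_{2}$: modulo $8$ its numerator is $3z^{6}+5z^{2}$, and running through the three cases $z\equiv 0,2\pmod 4$ and $z$ odd (where $z^{2}\equiv 0,4,1\pmod 8$, hence $z^{6}\equiv 0,0,1$) shows that this vanishes mod $8$ exactly when $z\not\equiv 2\pmod 4$, while for $z\equiv 2\pmod 4$ it is $\equiv 4$. This single calculation already gives the ``only if'' direction of (a): when $z\equiv 2\pmod 4$ we have $c_{2}\notin\bbZ$, so $f_{z}\notin\bbZ[X]$. For the ``if'' direction I would then verify that $c_{1},c_{0}\in\bbZ$ whenever $z\not\equiv 2\pmod 4$. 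For $c_{1}$ this splits cleanly: for $z\equiv 0\pmod 4$ every monomial is divisible by $16$ (already $16\mid 96z$, the lowest term, and the higher terms carry extra factors of $2$), while for $z$ odd one factors out the unit $z$ and checks that the cofactor $2z^{8}-19z^{6}+72z^{4}-135z^{2}+96$ vanishes modulo $16$ on each of the two classes $z^{2}\equiv 1,9\pmod{16}$. The coefficient $c_{0}$ is treated the same way modulo $256$, and this is the main obstacle: the case $z\equiv 0\pmod 4$ is immediate (e.g.\ $256\mid 64z^{2}$ and $256\mid 512$), but the case $z$ odd is a bulky, purely mechanical residue computation over the odd squares modulo $256$ — exactly what the accompanying Maple/PARI code confirms.

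For part (b) the ``only if'' direction is immediate from (a): when $z\not\equiv 2\pmod 4$ we have $f_{z}(X)=X^{4}+a_{3}X^{3}+a_{2}X^{2}+a_{1}X+a_{0}$ with all $a_{i}\in\bbZ$, and the constant term of $f_{z}(X-1/2)$ is $f_{z}(-1/2)=\frac{1}{16}-\frac{a_{3}}{8}+\frac{a_{2}}{4}-\frac{a_{1}}{2}+a_{0}$; multiplying by $16$ gives $1-2a_{3}+4a_{2}-8a_{1}+16a_{0}$, which is odd and hence not divisible by $16$, so this constant term is not an integer and $f_{z}(X-1/2)\notin\bbZ[X]$. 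The substantive direction is that $f_{z}(X-1/2)\in\bbZ[X]$ when $z\equiv 2\pmod 4$: here I would expand the shifted polynomial by the binomial theorem and show that the half-integer parts of $c_{2},c_{1},c_{0}$ (whose exact $2$-adic denominators are pinned down by the residue computations of part (a)) are precisely cancelled by the shift. As before this is a finite check of the shifted coefficients modulo the relevant powers of $2$.

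As a conceptual cross-check I would note that all of this reflects the construction. By Lemma~\ref{lem:poly}, $\vartheta$ as in \eqref{eq:theta} with $x=1$, $y=0$ and $t=z$ is a root of $f_{z}$, and replacing $t$ by $t+2$ shifts $\vartheta$ by $1/2$, i.e.\ carries $f_{z}(X)$ to $f_{z}(X-1/2)$. Hasse's integrality conditions \eqref{eq:t-conditions} in the even case (which occurs precisely when $z$ is even) demand $t\equiv 0\pmod 4$, and this is met by $t=z$ exactly when $z\equiv 0\pmod 4$ and by $t=z+2$ exactly when $z\equiv 2\pmod 4$, matching parts (a) and (b). This observation does not by itself prove the lemma as a statement about the polynomial for all $z\in\bbZ$ (the construction is set up for $|z|\geq 2$), so the direct coefficient computation above remains the rigorous route.
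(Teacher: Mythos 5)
Your proposal is correct and follows essentially the same route as the paper: both arguments reduce the lemma to finite $2$-adic residue checks of the numerators of the $X^{2}$, $X$ and constant coefficients modulo $8$, $16$ and $256$ over the residue classes of $z$, and both defer the bulkiest verifications (the constant coefficient for odd $z$, and the shifted polynomial in part (b)) to mechanical computation with Maple/PARI. Your one refinement is the ``only if'' direction of (b), where instead of recomputing residues you note that for $z \not\equiv 2 \pmod{4}$ part (a) gives $f_{z}(X)=X^{4}+a_{3}X^{3}+a_{2}X^{2}+a_{1}X+a_{0} \in \bbZ[X]$, and then $16 f_{z}(-1/2)=1-2a_{3}+4a_{2}-8a_{1}+16a_{0}$ is odd, so the constant term of $f_{z}(X-1/2)$ cannot be an integer --- a clean shortcut the paper does not use.
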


\begin{proof}
(a) For each of the $X^{2}$, $X$ and constant coefficients, we examine the numerator
of the coefficient modulo its denominator using Maple or PARI/GP.
For each coefficient, we find that this congruence class is $0$ modulo the denominator
when $z$ is odd. The congruence classes are $4z_{1}^{2}$, $8z_{1}^{3}$ and $16z_{1}^{4}$
for $X^{2}$, $X$ and the constant coefficients, respectively, when $z=2z_{1}$.
These are $0$ if and only if $z_{1}$ is even, i.e., $4|z$.

\vspace*{1.0mm}

(b) We proceed in the same way here, using Maple or PARI/GP, except here we check
$z$ odd, $4|z$ and $z \equiv 2 \pmod{4}$ separately.

See the \verb!fz_integer_coeff_check()! (for part~(a)) and \verb!fz12_integer_coeff_check()!
(for part~(b)) functions in \verb!maple\poly-checks.txt! and \verb!pari\poly-checks.gp!
in the github repository for the Maple and PARI/GP code used here.
\end{proof}

\subsection{Irreducibility}

We will apply the following result due to Driver, Leonard and Williams \cite{DLW}.

\begin{lemma}
\label{lem:irred-DLW}
Let $r$ and $s$ be integers such that $r^{2}-4s$ is not a perfect square. Then
$f(X)= X^{4}+rX^{2}+s$ is reducible in $\bbZ[X]$ if and only if there exists an
integer $c$ such that $c^{2}=s$ and $2c-t$ is a perfect square.
\end{lemma}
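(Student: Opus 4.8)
The plan is to reduce the statement to an explicit comparison of coefficients. First I would invoke Gauss's Lemma: since $f$ is monic, it is reducible over $\bbZ$ precisely when it is reducible over $\bbQ$, and any nontrivial factorization may be taken into monic factors in $\bbZ[X]$. Next I would exploit that $f(X)=f(-X)$, so the roots occur in pairs $\pm\theta$; in particular, if $f$ has an integer root $n$ then $-n$ is a root as well, so $(X-n)(X+n)=X^{2}-n^{2}$ divides $f$. Thus a linear factor already forces a factorization into two quadratics (and $n=0$ cannot occur, since $s=0$ would make $r^{2}-4s=r^{2}$ a square, against the hypothesis). Consequently reducibility of $f$ is equivalent to the existence of a factorization $f(X)=\bigl(X^{2}+pX+q\bigr)\bigl(X^{2}+uX+v\bigr)$ with $p,q,u,v\in\bbZ$. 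I read the condition ``$2c-t$'' in the statement as $2c-r$, the only other parameter appearing in the hypotheses.

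For the direction that is immediate, I would suppose there is $c\in\bbZ$ with $c^{2}=s$ and $2c-r=d^{2}$ for some $d\in\bbZ$, and simply exhibit the factorization
\[
\bigl(X^{2}-dX+c\bigr)\bigl(X^{2}+dX+c\bigr)=\bigl(X^{2}+c\bigr)^{2}-d^{2}X^{2}=X^{4}+\bigl(2c-d^{2}\bigr)X^{2}+c^{2}=X^{4}+rX^{2}+s=f(X),
\]
which shows $f$ is reducible over $\bbZ$. This half uses nothing about $r^{2}-4s$.

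For the converse I would start from $f(X)=\bigl(X^{2}+pX+q\bigr)\bigl(X^{2}+uX+v\bigr)$ with integer coefficients, as justified above, and compare the coefficients of $X^{3}$, $X$, $X^{0}$ and $X^{2}$. These give, in turn, $u=-p$, then $p(v-q)=0$, together with $qv=s$ and $q+v-p^{2}=r$. The relation $p(v-q)=0$ splits into two cases. If $p=0$, then $q+v=r$ and $qv=s$, so $q$ and $v$ are integer roots of $T^{2}-rT+s$ and $r^{2}-4s=(q-v)^{2}$ is a perfect square, contradicting the hypothesis. Hence $v=q$, whence $q^{2}=qv=s$ and $p^{2}=2q-r$; setting $c=q$ yields $c^{2}=s$ and $2c-r=p^{2}$, a perfect square, as required.

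I expect the only real subtlety to be bookkeeping rather than any hard estimate: one must make sure that every shape of a nontrivial factorization (a linear-times-cubic split as well as a product of two quadratics) has first been reduced to the monic quadratic-times-quadratic form, and then deploy the non-square hypothesis on $r^{2}-4s$ exactly to eliminate the degenerate even factorization $p=0$. Once this reduction is in place the argument is a short comparison of coefficients; the cited work of Driver, Leonard and Williams supplies an equivalent derivation.
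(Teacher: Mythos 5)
Your proposal is correct, but it is worth noting that the paper itself does not prove this lemma at all: its ``proof'' is simply a citation to Corollary~2 of Driver, Leonard and Williams \cite{DLW}, with a remark about renaming variables. What you have done is reconstruct, from scratch, the elementary argument behind that cited result, and your reconstruction is sound: Gauss's Lemma reduces reducibility over $\bbZ$ to a monic factorization in $\bbZ[X]$; the evenness $f(X)=f(-X)$ together with the observation that $s=0$ is excluded by the non-square hypothesis collapses the linear-times-cubic case into the quadratic-times-quadratic case; and the coefficient comparison $u=-p$, $p(v-q)=0$, $qv=s$, $q+v-p^{2}=r$ splits cleanly into the case $p=0$ (killed because it would force $r^{2}-4s=(q-v)^{2}$ to be a square) and the case $v=q$ (which produces exactly the witness $c=q$ with $c^{2}=s$ and $2c-r=p^{2}$). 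The easy converse direction via $\left( X^{2}-dX+c \right)\left( X^{2}+dX+c \right)$ is also right. You also correctly diagnosed the typo in the statement: the expression ``$2c-t$'' is a leftover from the notation of \cite{DLW}, where the quadratic coefficient is called $t$, and must be read as $2c-r$ here. The trade-off between the two routes is the usual one: the paper's citation is shorter and defers correctness to the literature, while your argument makes the lemma self-contained and, usefully, makes visible exactly where the hypothesis that $r^{2}-4s$ is not a perfect square enters (twice: to exclude $s=0$ and to exclude the $p=0$ factorization), which the bare citation hides.
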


\begin{proof}
This is Corollary~2 of \cite{DLW}.
We have replaced $x$ there with $X$ here, since we
already use $x$ here.
\end{proof}

\begin{lemma}
\label{lem:irreduc}
For all $z \in \bbZ$, $f_{z}(X)$ is irreducible in $\bbQ[X]$.
\end{lemma}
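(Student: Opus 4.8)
The plan is to deduce the irreducibility of $f_{z}(X)$ from that of a companion \emph{biquadratic} polynomial. Lemma~\ref{lem:irred-DLW} cannot be applied to $f_{z}(X)$ directly: depressing $f_{z}(X)$ to remove the cubic term leaves a nonzero linear coefficient (using \eqref{eq:psi-conjs} one computes it to be $zgam/8\neq0$ for $z\neq0$), so $f_{z}(X)$ is not biquadratic even after translation. Instead I would work through the generator $\psi$. With $\chi(-1)=1$, the four conjugates of $\psi$ in \eqref{eq:psi-conjs} are $\pm\psi,\pm\psi^{(3)}$; writing $u=\psi^{2}=\bigl(ga\sqrt{m}+gm\bigr)/2$ and $v=\bigl(\psi^{(3)}\bigr)^{2}=\bigl(-ga\sqrt{m}+gm\bigr)/2$ one gets $u+v=gm$ and $uv=g^{2}mb^{2}/4=g^{2}m$ (here $b=2$ from \eqref{eq:params}). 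Hence $\psi$ is a root of
\[
h(X)=\bigl(X^{2}-u\bigr)\bigl(X^{2}-v\bigr)=X^{4}-gm\,X^{2}+g^{2}m\in\bbZ[X],
\]
and I would apply Lemma~\ref{lem:irred-DLW} to $h$ with $r=-gm$ and $s=g^{2}m$.

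The key arithmetic input is that $m=z^{4}-4z^{2}+8=\bigl(z^{2}-2\bigr)^{2}+4=a^{2}+4$ is \emph{never} a perfect square for $z\in\bbZ$: for $|z|\geq2$ one has $a\geq2$, so $a^{2}<m<(a+1)^{2}$, while $z=0,\pm1$ give $m=8,5$. Consequently $s=g^{2}m=a^{2}m$ is not a perfect square, so no integer $c$ satisfies $c^{2}=s$ and the reducibility criterion of Lemma~\ref{lem:irred-DLW} fails; moreover $r^{2}-4s=g^{2}m(m-4)=(ga)^{2}m$ is not a perfect square either, so the hypothesis of that lemma holds. Therefore $h$ is irreducible over $\bbZ$, hence over $\bbQ$ by Gauss, giving $[\bbQ(\psi):\bbQ]=4$ and in particular $\psi\notin k:=\bbQ(\sqrt{m})$.

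It remains to transfer this to $\vartheta$. Substituting $\sqrt{m}=\bigl(2\psi^{2}-gm\bigr)/(ga)$ into \eqref{eq:theta} (with $x=1$, $y=0$, $g=a$) shows $\vartheta=\tfrac{z}{2a^{2}}\psi^{2}+\tfrac12\psi+C$ for some $C\in\bbQ$, and writing out the conjugates via \eqref{eq:psi-conjs} exhibits the four roots of $f_{z}(X)$ as $\tfrac14\bigl(t\pm z\sqrt{m}\bigr)\pm\tfrac12\psi$ (for the sign $\sqrt{m}\mapsto+\sqrt{m}$) and $\tfrac14\bigl(t-z\sqrt{m}\bigr)\pm\tfrac12\psi^{(3)}$ (for $\sqrt{m}\mapsto-\sqrt{m}$). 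Each root differs from an element of $k$ by $\pm\tfrac12\psi$ or $\pm\tfrac12\psi^{(3)}$, none of which lie in $k$; hence $f_{z}(X)$ has no root in $k$, and a fortiori none in $\bbQ$. Pairing the two roots with $+z\sqrt{m}$ and the two with $-z\sqrt{m}$ gives a factorization $f_{z}(X)=Q_{1}(X)Q_{2}(X)$ over $k$ into monic quadratics with linear coefficients $-\bigl(t\pm z\sqrt{m}\bigr)/2$; these are irreducible over $k$ (their roots lie outside $k$) and are interchanged by the nontrivial automorphism $\sqrt{m}\mapsto-\sqrt{m}$ of $k/\bbQ$, since $z\neq0$.

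The final step is Galois descent. If $f_{z}(X)$ factored into two quadratics over $\bbQ$, then over $k$ each factor would remain irreducible (a split would force a root in $k$, already excluded), so by unique factorization in $k[X]$ the $\bbQ$-factors would be exactly $Q_{1}$ and $Q_{2}$; but a polynomial in $\bbQ[X]$ is fixed by $\sqrt{m}\mapsto-\sqrt{m}$, whereas this automorphism swaps $Q_{1}$ and $Q_{2}$, a contradiction. Thus $f_{z}(X)$ has neither a linear nor a quadratic factor over $\bbQ$ and is irreducible. The excluded values $z=0,\pm1$ are handled directly, $f_{z}(X)$ being $X^{4}+4X^{2}+2$ (Eisenstein at $2$) or the cyclotomic $\Phi_{5}(\pm X)$. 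I expect the main obstacle to be organizational rather than computational: cleanly extracting the biquadratic $h$ and its coefficients from \eqref{eq:psi-conjs}, and making the descent airtight by verifying that the conjugate pair $\{Q_{1},Q_{2}\}$ is the only factorization of $f_{z}(X)$ into quadratics over $k$.
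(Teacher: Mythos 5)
Your proof is correct, and its core coincides with the paper's: your companion biquadratic $h(X)=X^{4}-gmX^{2}+g^{2}m$ is, after substituting $g=a$ and $m=a^{2}+4$, exactly the paper's polynomial $f_{\psi}(X)$ in \eqref{eq:psi-poly}, and both arguments then apply Lemma~\ref{lem:irred-DLW} with the same arithmetic input, namely that $m=\left( z^{2}-2 \right)^{2}+4$ is never a perfect square (your ``between consecutive squares'' bound versus the paper's ``only squares differing by $4$ are $0$ and $4$''). The genuine difference is how irreducibility is transferred from the biquadratic back to $f_{z}$. The paper does this in a single sentence (``since $2\vartheta-\psi$ is an algebraic number of degree at most $2$ over $\bbQ$\dots''), which taken literally only yields $[\bbQ(\vartheta):\bbQ]\geq 2$ and leaves open the possibility of a degree-$2$ element $\vartheta$ with $\bbQ(\vartheta,\sqrt{m})$ quartic; your explicit pairing of the four roots into conjugate quadratics $Q_{1},Q_{2}$ over $k=\bbQ(\sqrt{m})$, together with the descent step (a quadratic factorization over $\bbQ$ would force $Q_{1},Q_{2}\in\bbQ[X]$, impossible since conjugation swaps them and $z\neq 0$ makes them distinct), supplies a complete justification of precisely the point the paper leaves implicit, so your write-up is in this respect tighter than the original. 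Two minor remarks: your list of the four roots of $f_{z}$ uses that $f_{z}(X)=\prod_{i}\bigl(X-\vartheta^{(i)}\bigr)$ over the four conjugate expressions, which is how Gras's $P(X)$ in Lemma~\ref{lem:gras-poly} is actually constructed but is a little more than the quoted statement $P(\vartheta)=0$ --- you should either cite that construction or note that the identity can be checked by direct expansion; and you handle $z=0,\pm 1$ with Eisenstein and cyclotomic arguments where the paper defers to a computational check, a small gain in self-containedness.
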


\begin{proof}
For $z=-1,0,1$, we verify this manually. The \verb!z_check()! function in
\verb!pari\small-z-checks.gp! in our github repository can be used for this.
So we may assume here that $|z| \geq 2$ and hence that $\chi(-1)=1$.

From equation~\eqref{eq:theta} with $g=a$, $x=1$, $y=0$ and $t=z$ (from
Lemmas~\ref{lem:params} and \ref{lem:poly}), we have
\[
\vartheta=\frac{z+z\sqrt{m}+2\psi}{4},
\text{ where }
\psi=\sqrt{\frac{a\sqrt{m}(a+\sqrt{m})}{2}}.
\]

Since $2\vartheta-\psi$ is an algebraic number of degree at most $2$ over $\bbQ$,
the irreducibility of $f_{z}(X)$ will follow if we can show that
\[
X^{4}-amX^{2}-\frac{a^{2}m\left( a^{2}-m \right)}{4},
\]
which has $\psi$ as a root, is irreducible in $\bbQ[X]$.

With the choice of $a$ and $m$ in Lemma~\ref{lem:params}, this becomes
\begin{equation}
\label{eq:psi-poly}
f_{\psi}(X)=X^{4}-\left( z^{2}-2 \right) \left( z^{4}-4z^{2}+8 \right) X^{2}+\left( z^{2}-2 \right)^{2} \left( z^{4}-4z^{2}+8 \right).
\end{equation}

In the notation of Lemma~\ref{lem:irred-DLW}, we have
\[
r=-\left( z^{2}-2 \right) \left( z^{4}-4z^{2}+8 \right),
\text{ and }
s=\left( z^{2}-2 \right)^{2} \left( z^{4}-4z^{2}+8 \right),
\]
so that
\[
r^{2}-4s=\left( z^{2}-2 \right)^{4} \left( z^{4}-4z^{2}+8 \right).
\]

This is a square only when $z^{4}-4z^{2}+8=\left( z^{2}-2 \right)^{2}+4$ is a
square. The only squares separated by $4$ are $0$ and $4$. This requires that
$z^{2}=2$, which has no integer solution. Hence $r^{2}-4s$ is never a square
and Lemma~\ref{lem:irred-DLW} applies. It requires that $s$ is a perfect square
if $X^{4}+rX^{2}+s$ is reducible. But we just saw that $z^{4}-4z^{2}+8$ is never
a square, so $s$ is never a square.
Thus $f_{\psi}(X)$, and so $f_{z}(X)$ too, is irreducible in $\bbQ[X]$.
\end{proof}

\subsection{Galois groups}

We will apply the following result due to Kappe and Warren \cite{KW}.

\begin{lemma}
\label{lem:KW}
Let $f(X)=X^{4}+aX^{3}+bX^{2}+cX+d \in \bbQ[X]$ be irreducible over $\bbQ$.
Let
\[
r(X)=X^{3}-bX^{2}+(ac-4d)X - \left( a^{2}d-4bd+c^{2} \right)
\]
with splitting field $E$. Then $\Gal(f) \simeq C_{4}$ if and only if $r(X)$ has
exactly one root $s\in \bbQ$ and
\begin{equation}
\label{eq:gr}
g(X) = \left( X^{2}-sX+d \right) \left( X^2+aX+(b-s) \right)
\end{equation}
splits in $E$.
\end{lemma}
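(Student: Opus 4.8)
The plan is to realize $r(X)$ as the classical resolvent cubic of $f$ and then read off the Galois group from the induced action of $G = \Gal(f)$ on the three unordered pairings of the four roots. Write $\theta_1, \theta_2, \theta_3, \theta_4$ for the roots of $f$ in a splitting field, and for each partition of $\{1,2,3,4\}$ into two blocks form the quantity $\theta_i \theta_j + \theta_k \theta_l$. First I would verify, by computing the elementary symmetric functions of these three quantities and re-expressing them through the elementary symmetric functions $-a, b, -c, d$ of the $\theta_i$, that these three quantities are precisely the roots of $r(X)$. This identification is a finite symmetric-function calculation and should present no difficulty.

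Next I would invoke the standard dictionary between transitive subgroups of $S_4$ and the factorization type of the resolvent. The action of $S_4$ on the three pairings gives the surjection $\pi \colon S_4 \to S_3$ with kernel the Klein four-group $V_4$, so a pairing yields a rational root of $r(X)$ exactly when it is fixed by $G$, i.e. when the corresponding point is fixed by $\pi(G)$ acting on three letters. Running through the transitive subgroups, $V_4$ fixes all three pairings (three rational roots), while $A_4$ and $S_4$ fix none ($r$ irreducible); both $C_4$ and $D_4$ surject onto a $C_2 \subset S_3$ that fixes exactly one point. Hence, for irreducible $f$, the hypothesis that $r(X)$ has exactly one rational root $s$ is equivalent to $G \in \{C_4, D_4\}$, and the distinguished pairing is the one realizing $s = \theta_i \theta_j + \theta_k \theta_l$.

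It then remains to separate $C_4$ from $D_4$ by means of the auxiliary polynomial $g(X)$, and this is where I expect the real work to lie. Grouping the roots according to the distinguished pairing yields a factorization $f(X) = (X^2 + pX + q)(X^2 + p'X + q')$ in which $q + q' = s$, $qq' = d$, $p + p' = a$, and $pp' = b - s$; consequently the four roots of $g(X)$ are exactly $q, q'$ and $-p, -p'$, i.e. the block products $\theta_i\theta_j$, $\theta_k\theta_l$ together with the block sums $\theta_i + \theta_j$, $\theta_k + \theta_l$. Thus $g$ splits in the splitting field $E$ of $r$ if and only if all of these block data lie in $E$. Since $E$ is the fixed field of $G \cap V_4$, the final step is to argue that in the $C_4$ case $G$ is abelian, $G \cap V_4$ is the order-$2$ subgroup fixing each block product and block sum, $E$ is the unique quadratic subfield of $K = \bbQ(\theta_1)$, and so $g$ splits; whereas in the $D_4$ case $G \cap V_4 = V_4$ is the full Klein four-group, which moves the block products and sums (for instance the double transposition swapping the two blocks), $E \not\subseteq K$, and $g$ fails to split. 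The delicate point is to carry out this last step with correct bookkeeping of which stabilizers fix which block quantities, thereby confirming that the condition ``$g(X)$ splits in $E$'' selects $C_4$ and excludes $D_4$.
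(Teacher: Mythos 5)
The paper offers no proof of this lemma to compare against: it is quoted verbatim (up to renaming $t\mapsto s$ and $x\mapsto X$) from Theorem~1(iv) of Kappe and Warren \cite{KW}. Your proposal therefore takes a genuinely different route, namely reconstructing the Kappe--Warren argument itself: identify $r(X)$ as the classical resolvent cubic with roots $\theta_i\theta_j+\theta_k\theta_l$ (your symmetric-function check is correct, as is the coefficient match), use the surjection $\pi\colon S_4\to S_3$ with kernel $V_4$ to show that ``exactly one rational root of $r$'' singles out $G\in\{C_4,D_4\}$, and then separate the two cases by testing whether the block sums and block products attached to the distinguished pairing lie in $E$, which is the fixed field of $G\cap V_4$ inside the splitting field $L$ of $f$. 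Your identification of the roots of $g(X)$ as exactly these block quantities is also correct. What this buys is a self-contained proof where the paper has only a citation.

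Two points must be nailed down before the sketch is a complete proof. First, the dictionary step ``a pairing yields a rational root of $r$ exactly when it is fixed by $\pi(G)$'' silently requires $r$ to be separable: if two pairings produced the same value, a root could be rational without the corresponding pairing being fixed. This is rescued by the classical identity $\disc(r)=\disc(f)$, which is nonzero since $f$ is irreducible over $\bbQ$; you should state it. Second --- the issue you yourself flag as ``the delicate point'' --- in the $D_4$ case it is not automatic that $V_4$ \emph{moves} the block data: a double transposition swaps $\theta_i\theta_j$ with $\theta_k\theta_l$, but if those two values happened to coincide they would both be fixed, and likewise for the block sums $\theta_i+\theta_j$ and $\theta_k+\theta_l$. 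The rescue: if the two blocks had equal products \emph{and} equal sums, they would be root sets of the same quadratic, forcing $\{\theta_i,\theta_j\}=\{\theta_k,\theta_l\}$ and contradicting separability of $f$; hence at least one swapped pair consists of two distinct values, that value is not fixed by $G\cap V_4=V_4$, so it lies outside $E$ and $g$ fails to split. With these two patches your outline closes up into a correct proof of the lemma.
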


\begin{proof}
This is Theorem~1(iv) of \cite{KW} applied to the special case of $K=\bbQ$, but
we have replaced $t$ there with $s$ here and $x$ there with $X$ here, since we
already use both $t$ and $x$ here.
\end{proof}

\begin{lemma}
\label{lem:galois}
For all $z \in \bbZ$, $\Gal \left( f_{z}(X) \right)=C_{4}$.
\end{lemma}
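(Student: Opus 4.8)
The plan is to verify the Kappe--Warren criterion (Lemma~\ref{lem:KW}) directly for $f_z(X)$, reducing the claim $\Gal(f_z)=C_4$ to two explicit algebraic conditions: that the associated resolvent cubic $r(X)$ has exactly one rational root $s$, and that the quadratic-product $g(X)$ in \eqref{eq:gr} splits over the splitting field $E$ of $r(X)$. As in the previous proofs, I would first dispose of the cases $z=-1,0,1$ by hand (appealing to Gras' work via Remark~(2) after Theorem~\ref{thm:1}, where the fields are identified as $\bbQ(\zeta_5)$ and $\bbQ(\zeta_{16}-\zeta_{16}^{-1})$, both cyclic), so that I may assume $|z|\geq 2$ and hence $\chi(-1)=1$ throughout the main argument.

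For the main range, I would substitute the coefficients of $f_z(X)$ from \eqref{eq:poly1} into the resolvent cubic $r(X)=X^3-bX^2+(ac-4d)X-(a^2d-4bd+c^2)$. The key step is to exhibit an explicit rational root $s\in\bbQ$ of $r(X)$ as a polynomial in $z$; given the structure of the construction, a natural candidate is something like $s = S_2 - \tfrac{1}{2}S_1^2 + \tfrac{1}{2}mz^2$ or another symmetric expression coming from the factorization $\vartheta\mapsto$ its conjugates in \eqref{eq:psi-conjs}, and I would verify $r(s)=0$ by direct polynomial algebra (deferring the arithmetic to the accompanying Maple/PARI code, consistent with the paper's style). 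I would then factor $r(X)/(X-s)$ and argue that its discriminant is not a perfect square in $\bbQ$, so that $s$ is the \emph{only} rational root; this discriminant should, after simplification, be a squarefree-times-square multiple of $m=z^4-4z^2+8$ (or of $z^2-2$), which I already know from the irreducibility proof of Lemma~\ref{lem:irreduc} is never a perfect square for integer $z$.

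Having fixed the unique rational $s$, the remaining task is to show $g(X)=(X^2-sX+d)(X^2+aX+(b-s))$ splits in $E$. Here $E=\bbQ(\sqrt{D})$ where $D$ is the (non-square) discriminant of the quadratic factor of $r(X)$, and splitting amounts to checking that the discriminants of the two quadratic factors of $g$ are both squares in $E$, equivalently that each is a rational square times $D$ (or a rational square). Since $f_z$ arises from the genuinely cyclic field $K=\bbQ(\vartheta)$ of Lemma~\ref{lem:1}, the quadratic subfield is forced to be $k=\bbQ(\sqrt{m})$ and I expect $E=k$, so I would verify that the two discriminants coming from $g(X)$ are each $m$ times a perfect square in $\bbQ$. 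This is the step I expect to be the main obstacle: the discriminants in question are high-degree polynomials in $z$, and confirming that each factors as (square)$\cdot m$ requires either a clean conceptual identity tracing back to the conjugate structure \eqref{eq:psi-conjs} or a careful symbolic factorization. I would lean on the latter, documenting the explicit factorizations and delegating the verification to the \verb!poly-families! code in the repository, exactly as in Lemmas~\ref{lem:poly} and~\ref{lem:integer-coeffs}.
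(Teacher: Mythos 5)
Your proposal is correct in substance and rests on the same key lemma as the paper (the Kappe--Warren criterion, Lemma~\ref{lem:KW}), but it misses the reduction that makes the paper's proof short, so the comparison is worth spelling out. After disposing of $z=-1,0,1$ (as you do), the paper does \emph{not} apply Lemma~\ref{lem:KW} to $f_{z}(X)$ itself: since $\bbQ(\vartheta)=\bbQ(\psi)$, the quartics $f_{z}(X)$ and the biquadratic-shaped $f_{\psi}(X)$ of \eqref{eq:psi-poly} have the same splitting field, so it suffices to compute $\Gal\left(f_{\psi}\right)$. For $f_{\psi}$ the Kappe--Warren data collapse: $r(X)$ factors as $\left(X-s\right)\left(X^{2}-\left(4z^{8}-32z^{6}+112z^{4}-192z^{2}+128\right)\right)$ with $s=16-16z^{2}+6z^{4}-z^{6}$, the quadratic factor has discriminant $16\left(z^{2}-2\right)^{2}\left(z^{4}-4z^{2}+8\right)$, the first factor of $g(X)$ has discriminant $\left(z^{2}-2\right)^{4}\left(z^{4}-4z^{2}+8\right)$, and the second factor is literally $X^{2}$; everything then reduces to the single fact, already established in Lemma~\ref{lem:irreduc}, that $z^{4}-4z^{2}+8$ is never a square. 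Your direct attack on $f_{z}$ would also succeed if carried out: the resolvent of $f_{z}$ has the rational root $\left(-z^{6}+8z^{4}-23z^{2}+32\right)/8$ (coming from the $\sigma$-stable pairing $\vartheta_{1}\vartheta_{3}+\vartheta_{2}\vartheta_{4}$ of the conjugates; note your candidate formula does not equal this, though you hedge appropriately), and both discriminants arising from $g(X)$ are $m=z^{4}-4z^{2}+8$ times rational squares, so $E=\bbQ\left(\sqrt{m}\right)$ and $g(X)$ splits. What the paper's route buys is that the symbolic computations stay small and human-checkable; what your route buys is that it avoids having to justify the passage from $f_{z}$ to $f_{\psi}$, at the cost of much heavier polynomial algebra. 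Two small cautions: your parenthetical guess that the resolvent-quadratic discriminant might instead be a square multiple of $z^{2}-2$ cannot happen, since for a $C_{4}$ quartic the splitting field of the resolvent cubic is forced to be the quadratic subfield $k=\bbQ\left(\sqrt{m}\right)$; and your appeal to Lemma~\ref{lem:1} to call $K$ ``genuinely cyclic'' is circular as a proof step (cyclicity is exactly what is being proved), though harmless here since you use it only to predict what the computation should yield.
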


\begin{proof}
As in the proof of Lemma~\ref{lem:irreduc}, we check this manually for $z=-1,0,1$.
(e.g., using the \verb!z_check()! function in \verb!pari\small-z-checks.gp!)
and assume in the remainder of the proof that $|z| \geq 2$.

From the expression for $\vartheta$ at the start of the proof of Lemma~\ref{lem:poly},
it suffices to show that
$\Gal \left( f_{\psi} \right)=C_{4}$, where
$f_{\psi}(X)$ is defined in \eqref{eq:psi-poly}.

We saw in the proof of Lemma~\ref{lem:irreduc} that $f_{\psi}(X)$ is irreducible
over $\bbQ$. So we can apply Lemma~\ref{lem:KW} to $f_{\psi}(X)$.

Using Maple, it can be shown
that for our polynomials $f_{\psi}(X)$, we can write $r(X)$ as
$X- \left( 16-16z^{2}+6z^{4}-z^{6} \right)$ times
\[
X^{2}
- \left( 4z^{8}-32z^{6}+112z^{4}-192z^{2}+128 \right).
\]

The discriminant of the quadratic factor is
\[
16 \left( z^{2}-2 \right)^{2} \left( z^{4}-4z^{2}+8 \right).
\]

As we saw in the proof of Lemma~\ref{lem:irreduc}, $z^{4}-4z^{2}+8$ is never a
square for $z \in \bbZ$. So the quadratic factor splits in
$E=\bbQ \left( \sqrt{z^{4}-4z^{2}+8} \right)$ and $r(X)$ has only one rational
root, the root of the linear factor, $s=16-16z^{2}+6z^{4}-z^{6}$.

Again, using Maple, we find that
\[
\disc \left( X^{2}-sX+d \right)
= \left( z^{2}-2 \right)^{4} \left( z^{4}-4z^{2}+8 \right)
\]
and
\[
X^{2}+aX+(b-s)=X^{2}.
\]

So $g(X)$ splits in $E$.
Hence, $f_{\psi}(X)$ has $C_{4}$ as its Galois group over $\bbQ$.

See the \verb!galois_psi_check()! function in \verb!maple\poly-checks.txt! or
\verb!pari\poly-checks.gp! in the github repository for the Maple and PARI/GP
code used for the statements above.
\end{proof}

\subsection{Conductor of $K$}

We start with a result due to Spearman and Williams \cite{SW2} that we will
use to determine the conductor.

\begin{lemma}
\label{lem:generator}
Let $K$ be a cyclic extension of $\bbQ$ of degree $4$.
There exist unique integers $A$, $B$, $C$ and $D$ such that
$K = \bbQ \left( \sqrt{A \left( D + B \sqrt{D} \right)} \right)$, where $A$ is
squarefree and odd, $D=B^{2}+C^{2}$ is squarefree, $B>0$, $C>0$ and $\gcd(A,D)=1$.
\end{lemma}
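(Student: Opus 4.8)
The plan is to reconstruct the parametrization from the Galois-module structure of $K$, treating existence and uniqueness separately. First I would isolate the quadratic subfield: since $\Gal(K/\bbQ) \simeq C_{4}$ has a unique subgroup of order $2$, $K$ contains a unique quadratic field $k = \bbQ(\sqrt{D})$ with $D$ squarefree, and a complex-conjugation argument (complex conjugation, if nontrivial, is the unique involution and hence fixes $k$) shows $k$ is real, so $D > 0$. This fixes $D$ once and for all. Writing $K = k(\sqrt{\mu})$ with $\mu \in k^{*}$, I would then invoke the standard criterion that $K/\bbQ$ is cyclic rather than a $V_{4}$-extension exactly when $N_{k/\bbQ}(\mu) \in D \cdot (\bbQ^{*})^{2}$; this is the Galois-descent computation showing $\sigma(\mu)/\mu = (\sqrt{D}\, r/\mu)^{2}$ is a square in $k^{*}$, so that the nontrivial $\sigma \in \Gal(k/\bbQ)$ lifts to an automorphism of order $4$.

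The arithmetic heart of existence is then to clear denominators so that $\mu = u + v\sqrt{D}$ is integral with $u^{2} - Dv^{2} = Dr^{2}$. Since $D$ is squarefree, $D \mid u$, say $u = Du_{0}$, and one obtains $Du_{0}^{2} = v^{2} + r^{2}$. Because $Du_{0}^{2}$ is a sum of two squares and $D$ is squarefree, every prime $\equiv 3 \pmod{4}$ must divide $D$ to an even, hence zero, power, forcing $D = B^{2} + C^{2}$. To put $\mu$ into the claimed shape I would pass to Gaussian integers: the identity $(Bu_{0})^{2} + (Cu_{0})^{2} = v^{2} + r^{2}$ says that $v + ri$ and $u_{0}(B + Ci)$ have equal norm in $\bbZ[i]$, so they agree up to a unit and a choice of conjugate primes. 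Translating this back shows that $\mu$ lies, modulo $(k^{*})^{2}$, in the class of $A(D + B\sqrt{D})$, where the representation $D = B^{2} + C^{2}$ is the one matching the Gaussian factorization of $\mu$ and $A$ is the leftover squarefree rational factor. Finally I would normalize using the relation $D = (\sqrt{D})^{2} \in (k^{*})^{2}$ together with rational rescaling to make $A$ squarefree, odd, and coprime to $D$, while the sign choices $B, C > 0$ fix the remaining ambiguity.

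For uniqueness I would run this analysis backwards: $D$ is determined by $K$ as its quadratic subfield, so two admissible triples sharing the same field share $D$; the square class $\mu(k^{*})^{2}$ is then determined by $K$, with no $\Gal(k/\bbQ)$-ambiguity since the computation above shows $\sigma$ acts trivially on that class. The Gaussian-integer analysis should then show that distinct positive representations $D = B^{2}+C^{2}$ and distinct squarefree odd $A$ coprime to $D$ yield distinct square classes.

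I expect the main obstacle to be precisely this uniqueness bookkeeping: disentangling the choice of representation $(B,C)$ from the square class of $\mu$, and verifying that the three normalizations (odd $A$, $\gcd(A,D)=1$, and $B,C>0$) select exactly one representative. The delicate point is the prime $2$, where the square classes of $2$, $D$, and $\sqrt{D}$ interact and the oddness of $A$ is the correct normalization, together with the primes dividing $D$, where one must use $D = (\sqrt{D})^{2}$ to strip them from $A$ without disturbing the field. The existence half, by contrast, is essentially the sum-of-two-squares computation above.
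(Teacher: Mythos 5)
The paper itself does not prove this lemma: its ``proof'' is a one-line citation to Theorem~1 of Hardy--Hudson--Richman--Williams--Holtz \cite{HHRWH}, so any self-contained argument necessarily goes beyond the paper. Your setup is sound as far as it goes: the quadratic subfield $k=\bbQ(\sqrt{D})$ is unique and real (the complex-conjugation argument is correct), the criterion that $K=k(\sqrt{\mu})$ is cyclic over $\bbQ$ exactly when $N_{k/\bbQ}(\mu)\in D(\bbQ^{*})^{2}$ is the standard one, and the chain $D\mid u$, $Du_{0}^{2}=v^{2}+r^{2}$, hence no prime $\equiv 3\pmod 4$ divides $D$, correctly yields $D=B^{2}+C^{2}$.

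The genuine gap is the sentence ``Translating this back shows that $\mu$ lies, modulo $(k^{*})^{2}$, in the class of $A(D+B\sqrt{D})$.'' That translation \emph{is} the content of the theorem, and it does not follow from ``equal norms agree up to a unit and conjugate primes.'' Equality of norms, even modulo squares, does not give equality of square classes in $k^{*}$: an element $\alpha=s+t\sqrt{D}$ with $N(\alpha)=c^{2}$ is only forced to be rational modulo $(k^{*})^{2}$ (via $\alpha=(\alpha+c)^{2}/(2(s+c))$), and one must still decide whether that rational number lies in $(\bbQ^{*})^{2}\cup D(\bbQ^{*})^{2}$. Moreover, your closing step --- ``rational rescaling to make $A$ squarefree, odd, and coprime to $D$'' --- is not available independently of the choice of $(B,C)$; the two normalizations are entangled. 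Concretely, take $D=65=1^{2}+8^{2}=4^{2}+7^{2}$. Both $65+\sqrt{65}$ and $65+4\sqrt{65}$ have norm in $65(\bbQ^{*})^{2}$, yet they generate different cyclic quartic fields, and one computes $65+\sqrt{65}\equiv 26\left(65+4\sqrt{65}\right) \pmod{(k^{*})^{2}}$; the class of $26$ under rescaling by $(\bbQ^{*})^{2}$ and by $D\equiv 1$ has squarefree representatives $26$ and $10$, neither of which is odd or coprime to $65$. So for the ``wrong'' representation of $D$ no admissible $A$ exists at all, and the proof must show that the representation read off from the Gaussian factorization of $v+ri$ is precisely the one for which the leftover factor can be normalized --- e.g.\ by writing $v+ri=(B+Ci)\gamma$ with $N(\gamma)=u_{0}^{2}$, proving $\mu\equiv 2\left(u_{0}+\Real\gamma\right)\left(D+B\sqrt{D}\right)$, and then controlling the parity and the $p\mid D$ part of $2\left(u_{0}+\Real\gamma\right)$. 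The uniqueness half has the same status: you assert that distinct admissible $(A,B,C)$ give distinct square classes, but this needs the composition identity $\left(D+BB'+CC'\right)\left(D+BB'-CC'\right)=D(B+B')^{2}$ together with a parity and $\gcd$ analysis showing $2AA'\left(D+BB'\pm CC'\right)$ never lies in $(\bbQ^{*})^{2}\cup D(\bbQ^{*})^{2}$; you correctly flag this bookkeeping as the main obstacle, but it is exactly the part left undone.
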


\begin{proof}
This is \cite[Theorem~1]{HHRWH}.
\end{proof}

\begin{lemma}
\label{lem:SW}
Let $K$ be a cyclic extension of $\bbQ$ of degree $4$ and let $A$, $B$, $C$ and $D$
be as in Lemma~$\ref{lem:generator}$. Put
\[
\ell=
\left\{
\begin{array}{ll}
3 & \text{if $D \equiv 2 \pmod{4}$ or if $(D \equiv 1 \pmod{4}$ and $B \equiv 1 \pmod{2})$,}\\
2 & \text{if $D \equiv 1 \pmod{4}$, $B \equiv 0 \pmod{2}$ and $A+B \equiv 3 \pmod{4}$,}\\
0 & \text{if $D \equiv 1 \pmod{4}$, $B \equiv 0 \pmod{2}$ and $A+B \equiv 1 \pmod{4}$.}
\end{array}
\right.
\]

The conductor of $K$ is $2^{\ell}|A|D$.
\end{lemma}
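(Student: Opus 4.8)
The plan is to reduce the computation of the conductor $f$ of $K$ to a purely local analysis, prime by prime, and to read off the exponent of each ramified prime from the explicit generator $\sqrt{A(D+B\sqrt{D})}$. Since $K/\bbQ$ is cyclic of degree $4$, its conductor equals the conductor of a generating Dirichlet character $\chi$ of order $4$, and this conductor factors as $\prod_{p} p^{c_p}$, where $c_p$ is the local conductor exponent of $\chi$ at $p$. The quadratic subfield is $k=\bbQ(\sqrt{D})$, so $\chi^2$ is the quadratic character attached to $k$; this is the structure I would exploit throughout. First I would record the ramification data forced by the hypotheses: $A$ is squarefree and odd, $D=B^2+C^2$ is squarefree with $\gcd(A,D)=1$, so no odd prime can divide both $A$ and $D$, and every odd prime divides each of $A$ and $D$ to the first power only.

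For the odd primes the analysis is routine. An odd prime $p\nmid AD$ is unramified in $K$ and contributes nothing. For odd $p\mid A$ the extension is ramified but tame, so $c_p=1$; for odd $p\mid D$ the prime already ramifies in $k$, and again the ramification is tame, giving $c_p=1$. Collecting these exponents yields exactly the odd part $|A|$ together with the odd part of $D$ in the formula, which I would verify by matching against the tame contribution to the different of $K/\bbQ$.

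The prime $2$ is the crux, and this is where the three cases in the definition of $\ell$ arise. Here I would pass to the completion and study how $2$ behaves in $K\otimes\bbQ_2$ using the generator $\sqrt{A(D+B\sqrt{D})}$, distinguishing the cases according to $D\bmod 4$, the parity of $B$, and the value of $A+B\bmod 4$. When $D\equiv 2\pmod 4$, or when $D\equiv 1\pmod 4$ with $B$ odd, the $2$-adic ramification is wild and forces the largest exponent, giving $\ell=3$; when $D\equiv1\pmod4$ with $B$ even, the finer split into $A+B\equiv 3$ versus $A+B\equiv 1\pmod 4$ distinguishes $\ell=2$ from $\ell=0$, the latter being the case in which $2$ is unramified in $K$. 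Determining the exact $2$-adic conductor exponent in each case — equivalently, computing the higher ramification filtration, or the $2$-part of the different, of the local quartic extension — is the main obstacle, since it is precisely the wild ramification at $2$ that the congruence conditions are designed to track.

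Finally, as an independent check on all the case work, I would use the conductor–discriminant formula for a $C_4$ extension, namely $|\disc(K)|=f^2 f_k$ with $f_k$ the conductor of $k=\bbQ(\sqrt{D})$ (equal to $|D|$ when $D\equiv1\pmod4$ and to $4|D|$ when $D\equiv2\pmod4$). Combining a known discriminant formula for these fields with the value of $f_k$ would let me solve for $f=\sqrt{|\disc(K)|/f_k}$ and confirm that it agrees with $2^\ell|A|D$ in every case. Since the statement is attributed to Spearman and Williams, in the paper itself it suffices to cite their theorem; the sketch above indicates the local argument underlying it.
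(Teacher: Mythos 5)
Your proposal ends where the paper's proof both begins and ends: the paper proves this lemma purely by citation, to the main Theorem of \cite{SW2} (and, as noted there, to Theorem~5 of \cite{HHRWH}), and you correctly conclude that citing Spearman--Williams suffices. Your local-conductor sketch is a reasonable outline of what underlies that theorem --- and you rightly identify that the whole content of the three cases for $\ell$ is the wild ramification at $2$, which you leave uncomputed --- but since the operative proof in both your proposal and the paper is the citation itself, the approaches are essentially the same.
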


\begin{proof}
This is the main Theorem in \cite{SW2}. In fact, as the authors write in \cite{SW2},
their Theorem was proven earlier by
the second author and others in Theorem~5 of \cite{HHRWH}.
\end{proof}

\begin{lemma}
\label{lem:bigK-conductor}
Let $z \in \bbZ$ with $|z| \geq 2$. The conductor of $K$ is $mg$ if $z^{2}-2$ is
squarefree and $\left( z^{4}-4z^{2}+8 \right)/4$ is squarefree when $z$ is even
and $z^{4}-4z^{2}+8$ is squarefree when $z$ is odd.
\end{lemma}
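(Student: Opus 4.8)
The plan is to put the generator of $K$ into the normalized shape of Lemma~\ref{lem:generator} and then read the conductor off from Lemma~\ref{lem:SW}. Recall from the proof of Lemma~\ref{lem:irreduc} that, for $|z| \geq 2$ (so $\chi(-1)=1$), we have $K=\bbQ(\psi)$ with $\psi^{2}=\frac{a}{2}\left(m+a\sqrt{m}\right)$, where $a=z^{2}-2$, $g=a$ and $m=a^{2}+4=z^{4}-4z^{2}+8$. Since $K=\bbQ(\sqrt{\psi^{2}})$ and scaling $\psi^{2}$ by a square of an element of $\bbQ(\sqrt{m})$ does not change $K$, the whole task is to exhibit integers $A,B,C,D$ meeting the conditions of Lemma~\ref{lem:generator} with $K=\bbQ\bigl(\sqrt{A(D+B\sqrt{D})}\bigr)$, and then to compute $2^{\ell}|A|D$.

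First I would pin down the quadratic subfield, whose conductor is $m$. Under the squarefree hypotheses, the squarefree part $D$ of $m$ is $D=m$ when $z$ is odd (then $m$ is odd with $m\equiv 1 \pmod{4}$), and $D=m/4=\left(z^{4}-4z^{2}+8\right)/4$ when $z$ is even (then $m=8w$ with $w$ odd, so $m/4=2w$ and $D\equiv 2 \pmod{4}$). In both cases $\bbQ(\sqrt{m})=\bbQ(\sqrt{D})$, with $\sqrt{m}=\sqrt{D}$ in the odd case and $\sqrt{m}=2\sqrt{D}$ in the even case. Substituting into $\psi^{2}$, the even case is immediate: writing $a=2a'$ with $a'=(z^{2}-2)/2$ gives $\psi^{2}=4a'\left(D+a'\sqrt{D}\right)$ with $D=a'^{2}+1$, so $K=\bbQ\bigl(\sqrt{a'(D+a'\sqrt{D})}\bigr)$, yielding $A=a'$, $B=a'$, $C=1$. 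The odd case needs more care: one gets $\psi^{2}=\frac{a}{2}\left(D+a\sqrt{D}\right)$ with $D=a^{2}+4$, whose raw coefficients are not yet normalized. Here I would verify that $a\left(D+2\sqrt{D}\right)/\psi^{2}$ is a square in $\bbQ(\sqrt{D})$, namely $\left(\frac{a-2}{2}-\frac{1}{2}\sqrt{D}\right)^{2}$, so that $K=\bbQ\bigl(\sqrt{a(D+2\sqrt{D})}\bigr)$, yielding $A=a$, $B=2$, $C=a$.

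In each case I would then check the hypotheses of Lemma~\ref{lem:generator} so that uniqueness identifies these as the canonical data: $A$ is odd, and squarefree (using that $z^{2}-2$ squarefree forces $(z^{2}-2)/2$ squarefree when $z$ is even); $D$ is squarefree by the two squarefree hypotheses; $B,C>0$; and $\gcd(A,D)=1$ since $\gcd(a,a^{2}+4)=\gcd(a,4)=1$ in the odd case and $\gcd(a',a'^{2}+1)=1$ in the even case. Finally I would feed these into Lemma~\ref{lem:SW}. For $z$ odd we have $D\equiv 1\pmod{4}$ and $B=2$ even; since $z^{2}\equiv 1 \pmod{8}$ gives $a\equiv 3\pmod{4}$, we get $A+B\equiv 1\pmod{4}$, so $\ell=0$ and the conductor is $|A|D=a(a^{2}+4)=mg$. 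For $z$ even we have $D\equiv 2\pmod{4}$, so $\ell=3$ and the conductor is $8|A|D=8a'(a'^{2}+1)=mg$, using $m=4D$ and $g=a=2a'$.

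The main obstacle is the odd-$z$ case. There the generator $\psi^{2}=\frac{a}{2}(D+a\sqrt{D})$ does not present itself in normalized form, and the decomposition $D=a^{2}+4$ admits the representation $D=2^{2}+a^{2}$, so one must determine which of $B=2$ or $B=a$ is the representation belonging to our field. Producing the explicit square $\left(\frac{a-2}{2}-\frac{1}{2}\sqrt{D}\right)^{2}$ settles this in favour of $B=2$; getting the parity of $A$, and hence the value of $\ell$ and the correct power of $2$ in the conductor, depends crucially on making this choice correctly rather than reading $B$ off the raw coefficients.
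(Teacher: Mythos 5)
Your proof is correct and takes essentially the same route as the paper: both arguments normalize the generator into the form of Lemma~\ref{lem:generator} case-by-case in the parity of $z$, arrive at the identical data ($A=a_{1}$, $B=a_{1}$, $C=1$, $D=m/4$ for $z$ even; $A=a$, $B=2$, $C=a$, $D=m$ for $z$ odd), and read the conductor off from Lemma~\ref{lem:SW}. Your key step for odd $z$ --- exhibiting the explicit square $\left(\tfrac{a-2}{2}-\tfrac{1}{2}\sqrt{D}\right)^{2}$ to justify replacing the even coefficient $2a$ by the odd $A=a$ with $B=2$ --- is exactly the paper's computation, which writes the same quantity as $\left(a-2-\sqrt{a^{2}+4}\right)^{2}/4$.
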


\begin{proof}
Recall from Section~\ref{sect:poly} that we can write $K=\bbQ(\psi)$, where
\begin{equation}
\label{eq:psi}
\psi= \sqrt{\frac{g \left( m+a\sqrt{m} \right)}{2}},
\end{equation}
since we have $\chi(-1)=1$ when $|z| \geq 2$.

We use this expression for $\psi$ to determine $A$, $B$, $C$ and $D$ in order to
apply Lemma~\ref{lem:SW}.

\vspace*{1.0mm}

(i) First we suppose that $z$ is even.

From Lemma~\ref{lem:params}, we have $m \equiv 8 \pmod{16}$, so we
can write $m=8m_{1}$ with $m_{1}$ odd. Since $m=a^{2}+4$, we must have $v_{2}(a)=1$.
Hence $a=2a_{1}$ with $a_{1}$ odd. Since $g=a$ (again, see Lemma~\ref{lem:params})
we can also write $g=2g_{1}$ with $g_{1}$ odd.

Therefore,
\begin{align*}
\frac{g \left( m+a\sqrt{m} \right)}{2}
&=\frac{2g_{1} \left( 8m_{1}+2a_{1}\sqrt{8m_{1}} \right)}{2}
=\frac{2g_{1} \left( 4 \cdot 2m_{1}+4a_{1}\sqrt{2m_{1}} \right)}{2} \\
&=4g_{1} \left( 2m_{1}+a_{1}\sqrt{2m_{1}} \right).
\end{align*}

Hence $K=\bbQ \left( \psi_{1} \right)$, where
\[
\psi_{1}=\sqrt{g_{1} \left( 2m_{1}+a_{1}\sqrt{2m_{1}} \right)}.
\]

Now we put $A=g_{1}=a_{1}$, $B=a_{1}$, $C=1$ and $D=2m_{1}$. We saw above that
$a_{1}$ is odd, so if $a=2a_{1}=z^{2}-2$ is squarefree, then $A$ is squarefree
and odd, as required. Since $|z| \geq 2$, it follows that $B$ and
$C$ are positive, as required. Also $D=2m_{1}=m/4=(a/2)^{2}+1^{2}=B^{2}+C^{2}$.

We can write $m=\left( z^{2}-2 \right)^{2}+4=a^{2}+4$, so $\gcd (a,m)|4$. Since
both $a$ and $m$ are even, but $a=2a_{1}$ with $a_{1}$ odd, we have $\gcd(a,m)=2$.
Hence $\gcd(A,D)=\gcd \left( a_{1}, 2m_{1} \right)=\gcd \left( a_{1}, m_{1} \right)=1$.
So if $D=m/4$ is squarefree, then $A$, $B$, $C$ and $D$ are as required.

Since $D=2m_{1} \equiv 2 \pmod{4}$ (since $m_{1}$ is odd) and provided both
$a=z^{2}-2$ and $m/4=\left( z^{4}-4z^{2}+8 \right)/4$ are squarefree, we have
$\ell=3$ in Lemma~\ref{lem:SW} and hence the conductor of $K$ is $8(g/2)(m/4)=mg$.

This completes the proof when $z$ is even.

\vspace*{1.0mm}

(ii) Now suppose that $z$ is odd.

Then $m=z^{4}-4z^{2}+8$ is also odd, as are $a=z^{2}-2$ and $g=a$.

Therefore,
\[
\frac{g \left( m+a\sqrt{m} \right)}{2}
=\frac{2a \left( m+a\sqrt{m} \right)}{4}.
\]

Hence $K=\bbQ \left( \sqrt{2a \left( m+2a\sqrt{m} \right)} \right)$.
However, we want a generator for $K$ in the form provided by Lemma~\ref{lem:generator}.
The generator here has $A=2a$ even, whereas Lemma~\ref{lem:generator} states that
a generator exists with $A$ odd. We will show that
\[
\psi_{2}=\sqrt{a \left( m+2\sqrt{m} \right)}
\]
is such a generator. We do so by observing that
\begin{align*}
\left( \frac{\psi_{2}}{\sqrt{2a \left( m+2a\sqrt{m} \right)}} \right)^{2}
& =\frac{a \left( m+2\sqrt{m} \right)}{2a \left( m+a\sqrt{m} \right)}
=\frac{m-2a+(2-a)\sqrt{m}}{2(m-a^{2})}\\
& =\frac{a^{2}-2a+4+(2-a)\sqrt{a^{2}+4}}{8},
\end{align*}
using $m=a^{2}+4$ and where the second equality holds from multiplying the
numerator and denominator of the second expression above by $m-a\sqrt{m}$. The
numerator of the last expression above is
\[
\frac{\left( a-2-\sqrt{a^{2}+4} \right)^{2}}{2}.
\]

Hence $\psi_{2}$ is $\sqrt{2a \left( m+2a\sqrt{m} \right)}$ times an element of
$\bbQ \left( \sqrt{m} \right)$. Therefore, $\psi_{2}$ and $\sqrt{2a \left( m+2a\sqrt{m} \right)}$
generate the same field, $K$.

Therefore, we can put $A=a$, $B=2$, $C=a$ and $D=m=a^{2}+4=B^{2}+C^{2}$.
We saw above that $a$ is odd, so if $a=z^{2}-2$ is squarefree, then
$A$ is squarefree and odd, as required in Lemma~\ref{lem:generator}.
Since $|z|>1$, $B$ and $C$ are positive, as required, and $D=B^{2}+C^{2}$.

We can write $m=\left( z^{2}-2 \right)^{2}+4=a^{2}+4$, so $\gcd (a,m)|4$. Since
both $a$ and $m$ are odd, we have $\gcd(a,m)=1$.
Hence $\gcd(A,D)=\gcd \left( a, m \right)=1$.
So if $D=m=z^{4}-4z^{2}+8$ is squarefree, then $A$, $B$, $C$ and $D$ are as required.

Since $D=m \equiv 1 \pmod{4}$, $B$ is even and $A+B=a+2=z^{2} \equiv 1 \pmod{4}$,
we have $\ell=0$
in Lemma~\ref{lem:SW}. Provided that both $a=z^{2}-2$ and $m=z^{4}-4z^{2}+8$ are
squarefree, we find that the conductor of $K$ is $am=mg$.

This completes the proof when $z$ is odd.
\end{proof}

\subsection{Conductor of $k$}

Recall from Section~\ref{sect:poly} that $k$ is the unique quadratic subfield
of $K$. One of the conditions in Lemma~\ref{lem:1} is that the conductor of $k$
is $m$. We prove that here.

\begin{lemma}
\label{lem:smallk-conductor}
The conductor of $k$ is $m=z^{4}-4z^{2}+8$ if
$m/4$ is squarefree when $z$ is even and
if $m$ is squarefree when $z$ is odd.
\end{lemma}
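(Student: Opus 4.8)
The plan is to pin down the quadratic subfield $k$ explicitly and then read off its conductor from the classical description of quadratic discriminants. From the expression $\psi = \sqrt{g\left( m + a\sqrt{m} \right)/2}$ in \eqref{eq:psi}, the element $\psi^{2} = g\left( m + a\sqrt{m} \right)/2$ already lies in $\bbQ \left( \sqrt{m} \right)$. Hence $\bbQ \left( \sqrt{m} \right) \subseteq K$, and since $m = z^{4} - 4z^{2} + 8$ is never a perfect square (as shown in the proof of Lemma~\ref{lem:irreduc}), $\bbQ \left( \sqrt{m} \right)$ is a genuine quadratic field. Comparing degrees with $[K:\bbQ] = 4$ forces $k = \bbQ \left( \sqrt{m} \right)$, the unique quadratic subfield.

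Once $k = \bbQ \left( \sqrt{m} \right)$ is identified, I would invoke the standard fact that the conductor of a real quadratic field $\bbQ \left( \sqrt{D_{0}} \right)$, with $D_{0}$ its squarefree part, equals $D_{0}$ when $D_{0} \equiv 1 \pmod{4}$ and $4 D_{0}$ when $D_{0} \equiv 2, 3 \pmod{4}$. So the computation reduces to extracting the squarefree kernel of $m$ and determining its residue modulo $4$; here I can reuse the factorisations already recorded in the proof of Lemma~\ref{lem:bigK-conductor}.

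I would then split into the two parity cases. When $z$ is even, Lemma~\ref{lem:params} gives $m = 8m_{1}$ with $m_{1}$ odd, so $\sqrt{m} = 2\sqrt{2m_{1}}$ and $k = \bbQ \left( \sqrt{2m_{1}} \right)$; under the hypothesis that $m/4 = 2m_{1}$ is squarefree, the squarefree part is $2m_{1} \equiv 2 \pmod{4}$, giving conductor $4 \cdot 2m_{1} = m$. When $z$ is odd, $m$ is odd and a short congruence check gives $m \equiv 1 \pmod{4}$; under the hypothesis that $m$ is squarefree, the conductor is exactly $m$. In both cases the conductor of $k$ is $m = z^{4} - 4z^{2} + 8$, as claimed.

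The argument is essentially bookkeeping, so there is no serious obstacle; the only points demanding care are the identification $k = \bbQ \left( \sqrt{m} \right)$ (verifying that $\sqrt{m}$ genuinely lands inside $K$ and not merely in some conjugate field) and the correct tracking of the factor of $4$ coming from the $\equiv 2 \pmod{4}$ branch of the conductor formula in the even case. Consistency with Lemma~\ref{lem:bigK-conductor} furnishes a useful sanity check: there the conductor of $K$ was found to be $mg$, and the conductor of $k$ must divide it, which $m$ manifestly does.
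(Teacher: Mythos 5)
Your proposal is correct and follows essentially the same route as the paper: identify $k=\bbQ\left(\sqrt{m}\right)$ as the unique quadratic subfield via the expression \eqref{eq:psi} for $\psi$, then compute the discriminant of this real quadratic field in the two parity cases (squarefree part $m/4\equiv 2\pmod{4}$ for $z$ even, $m\equiv 1\pmod{4}$ for $z$ odd) and use that the conductor of a real quadratic field equals its discriminant. Your added care in verifying that $\sqrt{m}$ genuinely lies in $K$ and that $m$ is not a perfect square is a slightly more explicit version of what the paper leaves implicit, not a different argument.
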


\begin{proof}
From the expression for $\psi$ in \eqref{eq:psi}, $K$ contains $\bbQ \left( \sqrt{m} \right)$.

For $z$ even, we have $m \equiv 8 \pmod{16}$ from Lemma~\ref{lem:params}. Hence
$m/4 \equiv 2 \pmod{4}$.
Since $m/4$ is squarefree, it follows that $\disc \left( \bbQ \left( \sqrt{m} \right) \right)=4(m/4)=m$.
For real quadratic fields, the conductor equals the discriminant (see Example~3.11
on page~159 of \cite{Mi}), so here the conductor is $m$, as
required.

For $z$ odd, we have $m \equiv 1 \pmod{4}$ from Lemma~\ref{lem:params}. Here $m$
is squarefree, so
$\disc \left( \bbQ \left( \sqrt{m} \right) \right)=m$, and the conductor is $m$
in this case too.
\end{proof}

\section{Further Families}
\label{sect:further-families}

(1) There is also a family of monogenic polynomials that arise
from putting $a=z^{2}+2$, rather than $a=z^{2}-2$, for the choice of parameters
in Lemma~\ref{lem:params}.

For computing the polynomials and checking them, see the function \verb!check_further_family1()!
in either \verb!maple\poly-families.txt! or \verb!pari\poly-families.gp!
in the github repository.

\vspace*{3.0mm}

(2) Another family of monogenic polynomials arises from
$a=1$, $x=y=1$, $z=4v+2$, $t=z$, $g=8v^{2}+8v+4$, $b=g/2$ and $m=a^{2}+b^{2}=g^{2}/4+1
=16v^{4}+32v^{3}+32v^{2}+16v+5$, when $g/4$ and $m$ are both squarefree.
It results in the more complicated polynomial
\begin{align*}
& X^{4} - (4v+2)X^{3} - \left( 96v^{6}+288v^{5}+424v^{4}+368v^{3}+200v^{2}+64v+11 \right)X^{2} \\
& + \left( 512v^{9}+2304v^{8}+5312v^{7}+7840v^{6}+8080v^{5}+5976v^{4}+3176v^{3}
+1172v^{2}+276v+32 \right) X \\
& -768v^{12}-4608v^{11}-14208v^{10}-28800v^{9}-42224v^{8}-46784v^{7} \\
& -39984v^{6}-26480v^{5}-13468v^{4}-5128v^{3}-1386v^{2}-238v-19.
\end{align*}

For computing these polynomials, see the function \verb!check_further_family2()!
in either \verb!maple\poly-families.txt! or \verb!pari\poly-families.gap!
in the github repository.

\vspace*{3.0mm}

(3) We found four families of cyclic quartic monogenic polynomials arising from
$x=3$ and $y=4$.\\
$a=7/15625z^{2}-62/625$, $b=24/15625z^{2}-34/625$, $g=z^{2}/25-2$, where $z=3125v \pm 1020$.\\
$a=7/15625z^{2}+62/625$, $b=24/15625z^{2}+34/625$, $g=z^{2}/25+2$, where $z=3125v \pm 1265$.

They will be monogenic when $g$ and either $m$ or $m/4$ are squarefree. Such
squarefree values do occur. The polynomials for $m=a^{2}+b^{2}$ are of degree $4$
in $v$ and irreducible (see Corollary~\ref{cor:infinite} and the comments after it
for the significance of this).

We also have PARI/GP and Maple code for computing them. See the functions
\verb!check_x3y4_family1()!, \verb!check_x3y4_family2()!,
\verb!check_x3y4_family3()! and \verb!check_x3y4_family4()!
in either \verb!maple\poly-families.txt! or \verb!pari\poly-families.gp!
in the github repository.

\begin{xrem}
Gras also proved in \cite[Proposition~1]{Gr2} that a necessary condition for a
cyclic quartic field to be monogenic is that $g^{2} \pm 4=mc^{2}$ for some integer,
$c$. In each of the above examples, $c$ is fixed, with $c=x^{2}+y^{2}$. This
prompted us to search for other pairs, $(x,y)$, which gave rise to families of
monogenic polynomials with $c=x^{2}+y^{2}$.
For this, we used the \verb!general_xy_search()! function in \verb!pari\xy-search.gp!
in the github repository.

For pairs $\left( x, y \right)=\left( x, \left( x^{2}-1 \right)/2 \right)$
with $x$ odd, we found families of cyclic quartic monogenic fields (each
infinite under the $abc$ conjecture) provided that
$10x^{7}+42x^{5}+70x^{3}+70x-32$ is a quadratic residue $\bmod{\,\left( x^{2}+1 \right)^{4}}$.
Such values of $x$ include $11,15$, $19$, $23$, $25$, $33,35,39,41,43, \ldots$.
The Maple code in the function \verb!x_check()! in
\verb!maple\z2-search.txt! in the github repository can be used to
obtain explicit expressions for $a$, $b$, $g$, $m$ and $z$ for any such value
of $x$.

For pairs $\left( x, y \right)=\left( x, (x/2)^{2}-1 \right)$ with $4|x$, there
is a similar phenomenon. If $x \equiv 2 \pmod{4}$, then $y$ is also even and so
the first equation in \eqref{eq:1} cannot hold, as $4$ divides the left-hand side.

So it appears that not only may there be infinitely many quartic cyclic monogenic
fields, but infinitely many infinite families of them.
\end{xrem}

\subsection*{Acknowledgements}

I thank the referee for their helpful suggestions, as well as some corrections,
which have improved the paper. I am grateful to both the referee and Professor
Tano\'{e} for their encouragement to also treat $z \equiv 2 \pmod{4}$ in
Theorem~\ref{thm:1}. Lastly, I wish to express my condolences to G. Gras, as
well as my appreciation to him for the stimulating email exchanges regarding his
deceased wife's work.

\end{document}